\definecolor{CMUrot}{RGB}{128,18,18}
\definecolor{Gold}{RGB}{238,180,34}
\newcommand{\ol}[1]{\overline{#1}}
\numberwithin{equation}{section}
\newcommand{\R}{\ensuremath{\mathbb{R}}}
\newcommand{\Rn}{\ensuremath{\mathbb{R}^2}}
\newcommand{\Om}{\ensuremath{\Omega}}
\newcommand{\N}{\ensuremath{\mathbb{N}}}
\newcommand{\dist}{\operatorname{dist}}
\newcommand{\sdist}{\operatorname{sdist}}
\newcommand{\sd}{{\, d}}
\newcommand{\supp}{\operatorname{supp}}
\newcommand{\eps}{\ensuremath{\varepsilon}}
\newcommand{\Div}{\operatorname{div}}
\newcommand{\T}{\ensuremath{\mathbb{T}}}
\newcommand{\no}{\mathbf{n}}
\def\nn{\mathbf{n}}
\newcommand{\ve}{\mathbf{v}}
\newcommand{\btau}{{\boldsymbol{\tau}}}
\newtheorem{thm}{THEOREM}[section]
\newtheorem{cor}[thm]{Corollary}
\newtheorem{lem}[thm]{Lemma}
\newtheorem{defn}[thm]{Definition}
\newtheorem{prop}[thm]{Proposition}
\newtheorem{claim*}{Claim}
\newtheorem{rem}[thm]{Remark}
\newenvironment{proof*}[1]{{\bf Proof
#1:}}{\hspace*{\fill}\rule{1.2ex}{1.2ex}\\ }
\newenvironment{proof}{{\bf
Proof:\,}}{\hspace*{\fill}\rule{1.2ex}{1.2ex}\\ }
\newcommand{\p}{\partial}
\newcommand{\G}{\Gamma}
\def\({\left(}
\def\){\right)}
\newcommand{\tc}{\hat{c}}
\newcommand{\tr}{\hat{r}}
\newcommand{\order}{N}
\begin{document}
\begin{titlepage}
\title{Convergence of a Convective Allen-Cahn Equation to a Transport Equation}
\author{Helmut Abels}
\end{titlepage}

\maketitle
\abstract{We show convergence of solutions of a convective Allen-Cahn equation for a given smooth and divergence free velocity field to a transport equation for an evolving interface in the case when the thickness of the diffuse interface tends to zero and the mobility coefficient is proportional to the interfacial thickness. This is done for well-prepared initial data by estimating the difference of the exact and an approximate solution of the convective Allen-Cahn equation with the aid of a uniform lower bound for the linearized Allen-Cahn operator. The approximate solution is constructed with the aid of three terms from formally matched asymptotics calculations close to the interface.
	 }


         \smallskip

       {\small\noindent
{\bf Mathematics Subject Classification (2000):}
Primary: 76T99; 
Secondary: 35Q35, 
76D45\\ 
{\bf Key words:} Two-phase flow, diffuse interface model, sharp interface limit, Allen-Cahn equation
}

\section{Introduction and Main Result}\label{sec:Introduction}

In applications interfaces between regions of two (or more) species or components play an important role. There are two model classes to describe them: diffuse and sharp interface models. In the latter the interface is a lower dimensional hypersurface separating two regions and the evolution is usually described by a geometric evolution equation for the interface, possibly coupled with other equations in the regions separated by the interface. In diffuse interface models{,} the interface is described  as an interfacial layer of small, but positive thickness, which will be proportional to $\eps>0$ in the following. Both for a theoretical understanding and practical applications a rigorous understanding of the relation between sharp and diffuse interface models when $\eps\to 0$ -- also called ``sharp interface limit'' -- is desirable. Despite a lot of mathematical effort throughout the last decades rigorous results on these sharp interface limits are still a challenge and open in many cases.

In this contribution we want to present a basic method to treat such sharp interface limits rigorously, which is also called ``rigorous asymptotic expansion'' and goes back to de Mottoni and Schatzman~\cite{DeMottoniSchatzman} to show convergence of the Allen{-}Cahn equation to the mean curvature flow equation as long as the latter possesses a smooth solution. Afterwards this method was extended by Alikakos, Bates, Chen \cite{AlikakosLimitCH} to the Cahn-Hilliard equation, by Caginalp and Chen \cite{CaginalpChen} to the phase field equation, by Chen, Hilhorst and Logak~\cite{ChenHilhorstLogak} to the volume preserving Allen-Cahn equation, by Abels and Moser~\cite{AbelsMoserClose90} to the Allen-Cahn equation with contact angle close to $90^\circ$, by Moser~\cite{MoserAsy,MoserAdv} to the vectorial Allen-Cahn equation with $90^\circ$ contact angles at the boundary, by  Abels, Liu \cite{StokesAllenCahn} to a Stokes/Allen-Cahn system, and recently by Abels, Fei \cite{AbelsFei} and Abels, Moser, Fei~\cite{AbelsFeiMoser} for a Navier-Stokes/Allen Cahn system with constant mobility in different cases. The method consists of two steps. In the first step an approximate solution to the equations for the diffuse interface model is constructed, which is usually based on formulae derived by formally matched asymptotic calculations. In the second step the difference between the approximate and exact solution is estimated with the aid of energy-type estimates and suitable lower bounds for the linearized operator. 
We apply this method to a nowadays relative simple model case, which is the following convective Allen-Cahn equation:
\begin{align}
  \partial_t c_\eps + \ve \cdot \nabla c_\eps & = m_\eps\left( \Delta c_\eps -\eps^{-2} f'(c_\eps)\right) && \mbox{in }\Omega \times (0,T_0), \label{ConvAC1}\\
  c_\eps|_{\partial\Omega} & = -1 && \mbox{on } \partial \Omega \times (0,T_0), \label{ConvAC3}\\
  \left.c_\eps \right|_{t=0}&= c_{0,\eps} &&\mbox{in } \Omega . \label{ConvAC4}
\end{align}
Here  $\ve\colon \Omega \times [0,T) \to \mathbb{R}^2$ is a given smooth velocity field with $\no \cdot \ve|_{\partial\Omega}=0$ and $\Div \ve=0$ in $\ol{\Omega}\times [0,T_0]$. Moreover,  $c_\eps \colon \Omega \times [0,T) \to \mathbb{R}$ is a function, which is close to $\pm 1$ away from a diffuse interface of thickness $\eps>0$ and can {for example} describe the difference of volume fractions of two fluids. Furthermore, $m_\eps>0$ is a (constant) mobility coefficient and $f\colon \R\to \R$ is a suitable double well potential with global minima at $\pm 1$. More precisely, we assume that $f$ satisfies 
\begin{equation*}
  f'(\pm 1)=0, \quad f''(\pm 1)>0,\quad f(s)=f(-s)>0 \quad \text{for all }s\in (-1,1).
\end{equation*}
as in Abels and Liu~\cite{StokesAllenCahn}. We remark that under this assumption there is a unique solution of
\begin{align}\label{eq:OptProfile}
	-\theta_0''+f'(\theta_0)=0\quad\text{ in }\R, \qquad
	\lim_{\rho\to\pm \infty}\theta_0(\rho)=\pm 1, \qquad \theta_0(0)=0,
\end{align}
which is called \emph{optimal profile} and plays an important role in the following since it gives the shape of the diffuse interface in normal direction to leading order, cf.\ \eqref{eq:Expansion} below.
Moreover, we note that for every $m\in\N_0$ there is some $C_m>0$ such that
\begin{equation}\label{eq:decayopti}
  |\p_\rho^m(\theta_0(\rho)\mp 1)|\leq C_m e^{-\alpha|\rho|}\quad\text{for all } \rho\in\R  \text{ with }\rho \gtrless 0,
\end{equation}
where $\alpha=\min(\sqrt{f''(-1)}, \sqrt{f''(1)})$.

Throughout the manuscript
$\Omega\subseteq\Rn$ is assumed to be a bounded domain with smooth boundary. We restrict ourselves to the two-dimensional case for simplicity. But the following result can be extended to arbitrary space dimensions provided additional terms in the construction of the approximate solution $c_A$ in Section~\ref{sec:Approx} with higher powers of $\eps$ are added to obtain a sufficiently high power $N$ of $\eps$ for the approximation error. In the present case we can work with $N=2$.   

We remark that the convective Allen-Cahn equation \eqref{ConvAC1} is part of the following diffuse interface model for the two-phase flow of two incompressible Newtonian fluids with phase transition
\begin{align}
\partial_t \ve_\eps + \ve_\eps \cdot \nabla \ve_\eps - \Div(\nu(c_\eps) D\ve_\eps) + \nabla p^\eps & = -\eps \Div(\nabla c_\eps \otimes \nabla c_\eps), \label{NSAC1}\\
\Div\, \ve_\eps & = 0,  \\\label{NSAC3}
\partial_t c_\eps + \ve_\eps \cdot \nabla c_\eps & = m_\eps\left( \Delta c_\eps -\eps^{-2} f'(c_\eps)\right)
\end{align}
in $\Omega\times (0,T)$.
Here $\ve_\eps\colon \Omega\times [0,T)\to \Rn$ is the velocity of the mixture, $D \ve_\eps = \frac{1}{2} (\nabla \ve_\eps + (\nabla \ve_\eps)^T)$, $p^\eps\colon \Omega \times [0,T)\to \R$ is the pressure, and $\nu(c_\eps)>0$ is the viscosity of the mixture. We refer to \cite{AbelsFei}, \cite{AbelsFeiMoser}, or \cite{AbelsFischerMoser} for further comments, references for the system and results on the sharp interface limit.

For the convective Allen-Cahn equation \eqref{ConvAC1} as well as for the Navier-Stokes/Allen-Cahn system \eqref{NSAC1}-\eqref{NSAC3} the convergence as $\eps\to 0$ depends on the precise dependence of the mobility coefficient $m_\eps>0$ on $\eps$. For the convective Allen-Cahn equation formal convergence of solutions \eqref{NSAC1}-\eqref{NSAC3} was shown by the author in \cite{AbelsConvectiveAC} to a convective mean curvature flow equation
\begin{equation}\label{eq:MCF}
  V_{\Gamma_t} -\no_{\Gamma_t}\cdot \ve|_{\Gamma_t} = m_0 H_{\Gamma_t}
\end{equation}
in the that case $m_\eps = m_0>0$ is independent of $\eps$ and to a transport equation
\begin{equation}\label{eq:transport}
  V_{\Gamma_t} =\no_{\Gamma_t}\cdot \ve|_{\Gamma_t} 
\end{equation}
in the case $m_\eps= m_0 \eps$. This is done with the aid of formally matched asymptotic calculations. Here $V_{\Gamma_t}$ and $H_{\Gamma_t}$ denote the normal velocity and (mean) curvature of $\Gamma_t$, respectively, taken with respect to the interior normal on $\no_{\Gamma_t}$. Moreover, in the case $m_\eps = m_0 \eps^k$ for some $k>2$ non-convergence is shown in a certain sense.
A first rigorous convergence result for a Stokes/Allen-Cahn system (i.e., \eqref{NSAC1}-\eqref{NSAC3} without the terms $\partial_t \ve_\eps + \ve_\eps \cdot \nabla \ve_\eps$)  was shown by Liu and the author in \cite{StokesAllenCahn} in the case $m_\eps = m_0$ with constant viscosity. This result was extended in \cite{AbelsFei} and \cite{AbelsFeiMoser} to the case of the Navier-Stokes/Allen-Cahn system with non-constant viscosity with $m_\eps= m_0$ and $m_\eps= m_0 \sqrt{\eps}$. In these contributions the rigorous asymptotic expansion method is used and a two-dimensional situation is considered. A different method based on a relative entropy method developed by Fischer, Laux and Simon~\cite{FischerLauxSimon_AC_MCF} was used to show convergence of \eqref{NSAC1}-\eqref{NSAC3} by Hensel and Liu in \cite{HenselLiu} in the case $m_\eps = m_0$ and by Abels, Fischer and Moser in the case $m_\eps = m_0 \eps^k$ with $k\in (0,2)$ with constant viscosity and in two and three space dimensions. We refer to \cite{AbelsFeiMoser} for a comparison of the method and further references. 

It is the goal of the present contribution to prove convergence of \eqref{ConvAC1}-\eqref{ConvAC4} to \eqref{eq:transport} for well-prepared initial data in the case $m_\eps = m_0 \eps$. We note that an analogous convergence result to solutions of \eqref{eq:MCF} in the case $m_\eps = m_0$ and as long as \eqref{eq:MCF} possesses a smooth solution can be obtained from the arguments in \cite{StokesAllenCahn} by simply neglecting the coupling to the Stokes equations and taking $\ve_\eps\equiv \ve$ smooth and independent of $\eps$ (and $\ve_A\equiv \ve$). Our main result is as follows:
\begin{thm}\label{thm:main}
  Let $N=2$, $T_0>0$, $\Omega\subseteq \R^2$ be a bounded and smooth domain, $\ve\colon \ol{\Omega}\times [0,T_0]\to \R^2$ be smooth with $\Div \ve=0$ and $\ve|_{\partial\Omega} =0$, $\Gamma_0\subseteq \Omega$ be a smooth and closed Jordan curve, and $m_\eps=m_0\eps$ for all $\eps>0$ and some $m_0>0$. Moreover, let $(\Gamma_t)_{t\in [0,T_0]}$ be the curves satisfying \eqref{eq:transport} for all $t\in (0,T_0)$ and $\Gamma_t|_{t=0}=\Gamma_0$.  Then there exist $c_A=c_A(\eps)\colon \ol{\Omega}\times [0,T_0]\to \R$ for $\eps\in(0,1]$, uniformly bounded, such that the following holds: 

If $c_\eps\colon \ol{\Omega}\times [0,T_0]\to \R$, $\eps\in (0,1]$, are solutions of \eqref{ConvAC1}-\eqref{ConvAC3} with initial values  $c_{0,\eps}$ such that
\begin{equation}\label{initial assumption}
	\|c_{0,\eps}-c_A|_{t=0}\|_{L^2(\Omega)}\leq C\eps^{\order+\frac12}
\end{equation}
for all $\eps\in (0,1]$ and some $C>0$,
then there are some $\eps_0 \in (0,1]$ and $R>0$ such that for all $\eps \in (0,\eps_0]$ it holds
\begin{align}
		\|c_\eps-c_A\|_{L^\infty(0,T_0;L^2(\Omega))}+\eps^{\frac12}\|\nabla (c_\eps -c_A)\|_{L^2((\Omega\times (0,T_0))\setminus\Gamma(\delta))}  &\leq R\eps^{\order+\frac12},\label{eq_convC1}\\
		\eps^{\frac12} \|\nabla_{\btau}(c_\eps -c_A)\|_{L^2((\Omega\times(0,T_0))\cap \Gamma(2\delta))}+ \eps \|\nabla (c_\eps -c_A)\|_{L^2((\Omega\times(0,T_0))\cap \Gamma(2\delta))} &\leq R\eps^{\order+\frac12},\label{eq_convC2}
\end{align}
where $\Gamma(\tilde{\delta})$ are  tubular neighborhoods of $\Gamma= \bigcup_{t\in [0,T_0]}\Gamma_t\times \{t\}$ for $\tilde{\delta}\in[0,3\delta]$, $\delta>0$ small and $\nabla_{\btau}$ is a suitable tangential gradient, see Section \ref{sec:Prelim} below. Moreover, 
\begin{alignat}{2}\label{eq:Expansion}
  c_A&=\zeta(d_{\Gamma_t})\theta_0(\rho_\eps) + (1-\zeta(d_\Gamma))(\chi_{\Omega^+}-\chi_{\Omega^-}) + O(\eps)&\quad&\text{ in }L^\infty((0,T_0)\times\Om),
\end{alignat}
where $\eps\rho_\eps=d_{\Gamma}+O(\eps)$ in $\Gamma(3\delta)$, $d_\Gamma(\cdot,t)$ is the signed distance to $\Gamma_t$ and $\zeta\colon \R\to [0,1]$ is smooth such that $\supp \zeta \subseteq [-2\delta,2\delta]$ and $\zeta\equiv 1$ on $[-\delta,\delta]$. In particular, 
\[
\lim_{\eps\to 0} c_A=\pm 1\quad\text{ uniformly on compact subsets of } \Omega^\pm=\bigcup_{t\in [0,T_0]}\Omega_t^\pm\times \{t\},
\]
where $\Omega^+_t$ and $\Omega^-_t$ are the interior and exterior of $\Gamma_t$ (in $\Omega$), respectively.
\end{thm}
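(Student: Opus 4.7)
I would follow the rigorous asymptotic expansion method of de~Mottoni--Schatzman, in the form developed in \cite{StokesAllenCahn} and \cite{AbelsFei} for coupled Allen--Cahn systems. The two steps are: (i) construct $c_A$ via formally matched asymptotic expansions truncated at order $\eps^N$ ($N=2$) so that it solves \eqref{ConvAC1} up to a residual $R_\eps$ that is small in $L^2$; (ii) control $u=c_\eps-c_A$ through an $L^2$-energy estimate, using a uniform lower bound for the linearized Allen--Cahn operator at $c_A$.

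\textbf{Construction of $c_A$.} Near $\Gamma$, I would introduce tubular coordinates $(d_\Gamma,s,t)$ together with a stretched normal variable $\rho_\eps=(d_\Gamma+\eps h_1+\eps^2 h_2)/\eps$ and take the three-term ansatz
\begin{equation*}
c_A^{\mathrm{in}}(x,t)=\theta_0(\rho_\eps)+\eps\,c_1(\rho_\eps,s,t)+\eps^2 c_2(\rho_\eps,s,t).
\end{equation*}
Plugging into \eqref{ConvAC1} and expanding in powers of $\eps$, the $\eps^{-1}$-order equation recovers \eqref{eq:OptProfile}. At order $\eps^0$, the Fredholm solvability condition for the linearized operator $-\p_\rho^2+f''(\theta_0)$ tested against $\theta_0'$ forces $V_{\Gamma_t}=\no_{\Gamma_t}\cdot\ve|_{\Gamma_t}$; crucially, the extra factor $\eps$ in $m_\eps=m_0\eps$ suppresses the would-be $m_0H_{\Gamma_t}$ contribution, so one lands on the transport law \eqref{eq:transport} rather than on mean-curvature flow. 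The higher orders determine $c_1,h_1$ and $c_2,h_2$ as solutions of linear ODEs in $\rho$ with exponentially decaying inhomogeneities (using \eqref{eq:decayopti}). Away from $\Gamma$ one sets $c_A^{\mathrm{out}}\equiv\pm1$, and the inner and outer expansions are blended by the cutoff $\zeta(d_\Gamma)$, yielding \eqref{eq:Expansion}. By construction the residual
\begin{equation*}
R_\eps:=\p_t c_A+\ve\cdot\nabla c_A-m_0\eps\Delta c_A+\tfrac{m_0}{\eps}f'(c_A)
\end{equation*}
satisfies $\|R_\eps\|_{L^2(\Omega)}\lesssim\eps^{N+1/2}$ uniformly in $t\in[0,T_0]$.

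\textbf{Energy estimate and spectral bound.} The difference $u$ solves
\begin{equation*}
\p_t u+\ve\cdot\nabla u=m_0\eps\Delta u-\tfrac{m_0}{\eps}\bigl(f'(c_A+u)-f'(c_A)\bigr)+R_\eps,\qquad u|_{\p\Omega}=0.
\end{equation*}
Testing with $u$, using $\Div\ve=0$ with $\ve|_{\p\Omega}=0$ to drop the convection term, and writing $f'(c_A+u)-f'(c_A)=f''(c_A)u+g(c_A,u)u^2$ with $g$ bounded, one arrives at
\begin{equation*}
\tfrac12\tfrac{d}{dt}\|u\|_{L^2}^2+m_0\int_\Omega\bigl(\eps|\nabla u|^2+\eps^{-1}f''(c_A)u^2\bigr)dx\leq|\langle R_\eps,u\rangle|+\tfrac{C}{\eps}\int_\Omega|u|^3\,dx.
\end{equation*}
The possibly indefinite middle term is the quadratic form of the linearized Allen--Cahn operator around $c_A$, and I would invoke the refined Chen-type spectral estimate used in \cite{StokesAllenCahn}:
\begin{equation*}
\int_\Omega\bigl(\eps|\nabla u|^2+\eps^{-1}f''(c_A)u^2\bigr)dx\geq c_0\bigl(\eps\|\nabla u\|_{L^2(\Omega\setminus\Gamma(\delta))}^2+\eps\|\nabla_\btau u\|_{L^2(\Gamma(2\delta))}^2\bigr)-C\|u\|_{L^2}^2.
\end{equation*}
Combined with $|\langle R_\eps,u\rangle|\leq \|R_\eps\|_{L^2}\|u\|_{L^2}\lesssim \eps^{N+1/2}\|u\|_{L^2}$ and Gronwall's lemma, this delivers \eqref{eq_convC1} and the tangential part of \eqref{eq_convC2}; the remaining $m_0\eps\|\nabla u\|_{L^2}^2$ left over on the left-hand side provides the full gradient control in $\Gamma(2\delta)$ in \eqref{eq_convC2}.

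\textbf{Main obstacle.} The critical difficulty is closing the estimate against the cubic remainder $\eps^{-1}\int|u|^3$: because the diffusion coefficient is only $m_0\eps$ rather than $m_0$, a direct Gagliardo--Nirenberg interpolation leaves no slack. I would handle this by a continuity/bootstrap argument on
\begin{equation*}
T_\eps^\ast=\sup\bigl\{\,t\in[0,T_0]\,:\,\|u(\cdot,t)\|_{L^2(\Omega)}\leq \bar R\,\eps^{N+1/2}\,\bigr\}
\end{equation*}
with $\bar R$ a large, $\eps$-independent constant. On $[0,T_\eps^\ast]$, the bootstrap smallness of $\|u\|_{L^2}$ together with the uniform $L^\infty$ bounds on $c_A$ and $c_\eps$ (the latter coming from the double-well structure and the Dirichlet datum $-1$) turns the cubic into $\eps^{-1}\int|u|^3\leq C\|u\|_{L^2}^2$ (in two dimensions this is where $N=2$ is needed). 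Gronwall then \emph{strictly} improves the bootstrap bound for $\eps\leq \eps_0$, forcing $T_\eps^\ast=T_0$ and closing the argument, so that \eqref{eq_convC1}--\eqref{eq_convC2} hold; \eqref{eq:Expansion} is immediate from the construction of $c_A$.
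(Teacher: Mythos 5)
Your overall architecture coincides with the paper's: a three-term inner expansion in the stretched variable $\rho_\eps=(d_\Gamma-\eps h_1-\eps^2h_2)/\eps$ glued to $\pm1$ by $\zeta(d_\Gamma)$, a residual of order $\eps^{N+\frac12}$ in $L^2$, the Chen-type spectral estimate for the linearization at $c_A$ (the paper uses it for $c_{A,0}$ and transfers it to $c_A$ by Taylor expansion, Corollary~\ref{cor:Spectral}), and a Gronwall/continuation argument. However, your resolution of what you yourself identify as the critical difficulty --- the cubic remainder $\eps^{-1}\int_\Omega|u|^3$ --- does not work as stated. From the bootstrap bound $\|u\|_{L^2}\le\bar R\,\eps^{N+\frac12}$ together with uniform $L^\infty$ bounds on $c_\eps$ and $c_A$ you only obtain $\|u\|_{L^\infty}\le C$, hence
\begin{equation*}
\frac1\eps\int_\Omega|u|^3\,dx\;\le\;\frac{C}{\eps}\,\|u\|_{L^\infty}\,\|u\|_{L^2}^2\;\le\;\frac{C}{\eps}\,\|u\|_{L^2}^2,
\end{equation*}
not $C\|u\|_{L^2}^2$; the surviving factor $\eps^{-1}$ turns Gronwall into $e^{CT_0/\eps}$ and destroys the estimate. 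To obtain your claimed inequality you would need $\|u\|_{L^\infty}\lesssim\eps$, which does not follow from $\|u\|_{L^2}\lesssim\eps^{N+\frac12}$ in two dimensions without higher-order norms you do not control. Even integrating in time, the bound $\eps^{-1}\int_0^T\!\int_\Omega|u|^3\lesssim\eps^{2N}$ obtained this way is \emph{larger} than the target $\eps^{2N+1}$, so the bootstrap cannot be closed.

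The paper's actual mechanism is Proposition~\ref{prop:u3Estim} (a modification of \cite[Proposition~4.3]{AbelsFei}): an anisotropic Gagliardo--Nirenberg interpolation in the tubular neighborhood which uses \emph{all} the quantities controlled in the bootstrap \eqref{assumptions} --- in particular the weighted tangential-gradient bound $\eps^{\frac12}\|\nabla_\btau u\|_{L^2}$ and the normal-derivative bound $\eps\|\partial_\no u\|_{L^2}$ --- to produce the genuinely higher-order space-time estimate $\int_0^T\!\int_\Omega|u|^3\le C(R)T^{\frac12}\eps^{3N+\frac34}$. Then $\eps^{-1}\int_0^T\!\int_\Omega|u|^3=O(\eps^{3N-\frac14})=o(\eps^{2N+1})$ precisely because $N>\frac54$, and the continuation argument closes. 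Two concrete repairs to your proposal are therefore needed: (i) the bootstrap set $T_\eps^\ast$ must be defined through the full collection of norms in \eqref{assumptions}, not through $\|u(\cdot,t)\|_{L^2}$ alone, since the gradient bounds are inputs (not just outputs) of the cubic estimate; (ii) the cubic term must be handled by the interpolation of Proposition~\ref{prop:u3Estim} rather than by an $L^\infty$ argument. The remainder of your write-up (construction of $c_A$, derivation of the transport law from the solvability/leading-order condition, the energy identity, and the spectral lower bound) is consistent with the paper.
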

\begin{rem}
  We note that the solution $(\Gamma_t)_{t\in [0,T_0]}$ of \eqref{eq:transport} with initial curve $\Gamma_0$ are simply obtained by $\Gamma_t = X_t(\Gamma_0)$, where $X_t\colon \ol{\Omega}\times [0,T_0]\to \ol{\Omega}$ is determined as solution of the initial value problem
  \begin{alignat*}{2}
    \frac{d}{dt} X_t(\xi) &= \ve (X_t(\xi),t) &\qquad &\text{for all }t\in[0,T_0],\xi\in \ol{\Omega},\\
    X_0 (\xi) &= \xi&\qquad &\text{for all }\xi\in \ol{\Omega},
  \end{alignat*}
\end{rem}

Finally, we note that convergence results for the Allen-Cahn equation to the mean curvature flow equation globally in time using different concepts can e.g.\ be found in
\cite{BarlesDaLio,BarlesSouganidis,EvansSonerSouganidis,HenselLaux,Ilmanen,Kagaya,KatsoulakisKR,LauxSimon_BV,MizunoTonegawa}.

The structure of the contribution is as follows: In Section~\ref{sec:Prelim} we collect some preliminary results and introduce some notation. In Section~\ref{sec:Approx} we perform the first step in the rigorous expansion method, i.e., we construct an approximate solution of \eqref{ConvAC1}-\eqref{ConvAC4} up to an error of order $O(\eps^{2+\frac12})$ in $L^2(\Omega\times (0,T_0))$. Then in Section~\ref{sec:Main} we estimate the error between the approximate and exact solution of \eqref{ConvAC1}-\eqref{ConvAC4} with the aid of a spectral lower bound for the linearized Allen-Cahn operator. This proves our main result. 

\section{Preliminaries}\label{sec:Prelim}

We use the notation of \cite{StokesAllenCahn}. In particular, $\N$ denotes the set of natural numbers (without $0$), $\N_0=\N\cup\{0\}$, and $L^p(M)$, $1\leq p\leq\infty$ denotes the standard $L^p$-space on a measurable subset $M\subseteq \R^N$ and $L^p(M;X)$ its $X$-valued variant for a Banach space $X$. The standard $L^2$-Sobolev space on an open set $U$ of order $m\in\N_0$ is denoted by $H^m(U)$.

In the following we recall some basic notation and important preliminary results.
We parameterize $(\Gamma_t)_{t\in[0,T_0]}$ with the aid of a family of smooth diffeomorphisms $X_0\colon \T^1\times [0,T_0]\to \Om$ such that $\partial_s X_0(s,t)\neq 0$ for all $s\in\T^1$, $t\in [0,T_0]$ and define
 \begin{equation*}
\btau(s,t)= \frac{\partial_s X_0(s,t)}{|\partial_s X_0(s,t)|}\quad \text{and}\quad \no(s,t)=
\begin{pmatrix}
  0 & -1\\
1 & 0
\end{pmatrix}
\btau(s,t)\quad \text{for all }(s,t)\in \T^1\times [0,T_0].
\end{equation*}
The orientation of $\Gamma_t$ (induced by $X_0(\cdot,t)$) is chosen such that $\no(s,t)$ is the interior normal with respect to $\Omega^+_t$. Furthermore, we denote
\begin{equation}\label{eq:1.58}
\no_{\Gamma_t}(x):= \no (s,t)\quad\text{for all}~ x=X_0(s,t)\in \Gamma_t.
\end{equation}
and by $V_{\Gamma_t}$ and $H_{\Gamma_t}$ as the normal velocity and (mean) curvature of $\Gamma_t$ (with respect to $\no_{\Gamma_t}$). Finally, 
we define
 \begin{equation}\label{eq:1.57}
   V(s,t)= V_{\Gamma_t}(X_0(s,t)),\quad H(s,t)= H_{\Gamma_t}(X_0(s,t))\quad \text{for all }s\in\T^1, t\in [0,T_0].
 \end{equation}
Then it holds
\begin{equation*}
  V_{\Gamma_t}(X_0(s,t))=V(s,t)= \partial_t X_0(s,t)\cdot \no(s,t)\qquad \text{for all }(s,t)\in \T^1\times [0,T_0].
\end{equation*}

It is well-known that for $\delta>0$ sufficiently small,  the orthogonal projection $P_{\Gamma_t}(x)$ of all
\begin{equation*}
x\in \Gamma_t(3\delta) =\{y\in \Omega: \dist(y,\Gamma_t)<3\delta\}
\end{equation*}
is well-defined and smooth. We choose $\delta>0$ so small that $\dist(\partial\Omega,\Gamma_t)>3\delta$ for every $t\in [0,T_0]$. Then for every $x\in\Gamma_t(3\delta)$ we have
\begin{equation*}
x=P_{\Gamma_t}(x)+r\no_{\Gamma_t}(P_{\Gamma_t}(x)),
\end{equation*}
 where $r=\sdist(\Gamma_t,x)$. Throughout this contribution
\begin{equation*}
  d_{\G}(x,t):=\sdist (\Gamma_t,x)=
  \begin{cases}
    \dist(\Omega^-_t,x) &\text{if } x\not \in {\Omega^-_t},\\
    -\dist(\Omega^+_t,x) &\text{if } x \in {\Omega^-_t}
  \end{cases}
\end{equation*}
is the signed distance function to $\Gamma_t$ for $t\in[0,T_0]$.
For the following we denote for $\delta'\in (0,3\delta]$
\begin{equation*}
  \Gamma(\delta') =\bigcup_{t\in [0,T_0]} \Gamma_t(\delta') \times\{t\}.
\end{equation*}

We introduce new coordinates in  $\Gamma(3\delta)$ using
\begin{equation*}
  X\colon  (-3\delta, 3\delta)\times \T^1 \times [0,T_0]\to \Gamma(3\delta)~\text{by}~  X(r,s,t):= X_0(s,t)+r\no(s,t),
\end{equation*}
where
\begin{equation}\label{eq:1.42}
  r=\sdist(\Gamma_t,x), \qquad s= X_{0}^{-1}(P_{\Gamma_t}(x),t)=: S(x,t).
\end{equation}
Then
\begin{equation}\label{eq:1.26}
  \nabla d_{\G}(x,t)=\no_{\Gamma_t} (P_{\Gamma_t}(x)),~  \partial_t d_{\G}(x,t)=-V_{\Gamma_t} (P_{\Gamma_t}(x)),~\Delta d_\Gamma(q,t)=-H_{\Gamma_t}(q)
\end{equation}
for all $(x,t)\in \Gamma(3\delta)$, $q\in\Gamma_t$, $t\in [0,T_0]$, resp., cf.\ Chen et al.~\cite[Section~4.1]{ChenHilhorstLogak}.
 Furthermore, we define
\begin{equation}\label{eq:1.50}
\partial_{\btau} u(x,t):= \btau(S(x,t),t)\nabla_x u(x,t),\quad   \nabla_\btau u(x,t):= \partial_{\btau} u(x,t)\btau(S(x,t),t)\quad
 \end{equation}
for all $(x,t)\in \Gamma(3\delta)$. 

For given $\tilde{\phi}\colon (-3\delta,3\delta)\times \T^1\times [0,T_0]\to \R$ we associate a function $\phi\colon \Gamma(3\delta)\to \R$ by the relation
 \begin{equation*}
   \phi(x,t)=\tilde{\phi}(d_{\G}(x,t),S(x,t),t)\qquad \text{for all }(x,t)\in \Gamma(3\delta)
 \end{equation*}
 or equivalently
 \begin{equation*}
   \phi(X_0(s,t)+r\no(s,t),t)=\tilde{\phi}(r,s,t)\qquad \text{for all }r\in (-3\delta,3\delta),s\in \T^1, t\in [0,T_0].
 \end{equation*}
 Then \eqref{eq:1.26} and the chain rule yield for sufficiently smooth $\tilde{\phi}$
\begin{equation}\label{Prelim:1.13}
  \begin{split}
    \partial_t \phi(x,t) &= -V_{\Gamma_t} (P_{\Gamma_t}(x)) \partial_r\tilde{\phi}(r,s,t) + \partial_{t}^\Gamma \tilde{\phi}(r,s,t) \\
  \nabla \phi(x,t) &= \no_{\Gamma_t} (P_{\Gamma_t}(x)) \partial_r\tilde{\phi}(r,s,t) + \nabla^ \Gamma  \tilde{\phi}(r,s,t) \\
 \Delta \phi(x,t) &= \partial_r^2\tilde{\phi}(r,s,t) + \Delta d_{\G_t}(x) \partial_r\tilde{\phi}(r,s,t) +  \Delta^{\Gamma} \tilde{\phi}(r,s,t),
  \end{split}
\end{equation}
where $r,s$ are as in \eqref{eq:1.42} and we use the definitions
\begin{equation}\label{Prelim:1.12}
  \begin{split}
    \partial_{t}^\Gamma \tilde{\phi}(r,s,t) &:= \partial_t \tilde{\phi}(r,s,t) + \partial_t S(x,t)\partial_s \tilde{\phi}(r,s,t),\\
\nabla^{\Gamma} \tilde{\phi}(r,s,t) &:= (\nabla S)(x,t)\partial_s \tilde{\phi}(r,s,t),\\
\Delta^{\Gamma} \tilde{\phi}(r,s,t) &:= |(\nabla S)(x,t)|^2\partial_s^2 \tilde{\phi}(r,s,t)+(\Delta S)(x,t)\partial_s \tilde{\phi}(r,s,t),
  \end{split}
\end{equation}
 cf.~\cite[Section~4.1]{ChenHilhorstLogak} for more details.
We note that, if $g\colon \T^1\times [0,T_0]\to \R$ depends only on $(s,t)$, then $\nabla^\G g$ is a function of $(r,s,t)$:
\begin{equation}\label{eq:1.46}
  \nabla^\G g(r,s,t)=(\nabla S)(x,t)\partial_s g(s,t), \qquad \text{where }x=X(r,s,t)
\end{equation}
and analogously for $\Delta^\G h$ and $\p_t^\G h$.
The restrictions of these operators to $r=0$ are denoted by
\begin{equation}\label{eq:1.27}
\begin{split}
  (\nabla_\G h)(s,t):=(\nabla^\Gamma h)(0,s,t),\\
  (\Delta_\G h)(s,t):=(\Delta^\G h)(0,s,t),\\
  (D_t h)(s,t):=(\p_t^\G h)(0,s,t).
\end{split}
\end{equation}

Finally, we use the notation
\begin{alignat*}{2}
 (X_0^\ast u)(s,t)&:= u(X_0(s,t),t) &\qquad& \text{for all }s\in\T^1,t\in[0,T_0],\\
  (X_0^{\ast,-1} v)(p,t) & := v(X_0^{-1}(p,t),t) &\qquad& \text{for all }p\in\Gamma_t, t\in [0,T_0]
\end{alignat*}
 for $u\colon \bigcup_{t\in[0,T_0]}\Gamma_t\times \{t\}\to \R^N$  and $v\colon \T^{1}\times [0,T_0]\to \R^N$  for some $N\in\N$.
\section{Approximate Solutions}\label{sec:Approx}
\subsection{The Stretched Variable and Remainder Estimates}

The construction of the approximate solution will be based on a suitable scaling in normal direction proportional to the diffuse interface ``thickness'' $\eps$. To the end we use as in \cite{StokesAllenCahn} the so-called stretched variable $\rho$ defined by
\begin{equation}\label{eq:stretched}
  {\rho\equiv\rho_\eps(x,t)=\tfrac{d_\G(x,t)}{\eps}-h_1(S(x,t),t)-\eps h_2(S(x,t),t)},
\end{equation}
where $h_1,h_2\colon \T^1\times [0,T_0]\to \R$ are functions, which will determined in Section~\ref{subsec:Construction} suitably. Roughly speaking they are used to obtain that the set $\{\rho_\eps(x,t)=0\}$ is a sufficiently good approximation of $\{c_\eps(x,t)=0\}$. Moreover, we denote $h_\eps:=h_1+\eps h_2$.

For the construction of the approximate solution we will use:
\begin{lem}
Let ${\hat{w}}\colon \R\times \T^1\times [0,T_0]\to \R$ be sufficiently smooth and let
\begin{equation*}
w(x,t)={\hat{w}}\(\tfrac{d_\G(x,t)}\eps-h_\eps(S(x,t),t),S(x,t),t\) \quad \text{for all }(x,t)\in \Gamma(2\delta).
\end{equation*}
Then for each $\eps>0$
  \begin{equation}\label{eq:formula1}
  \begin{split}
    \p_t w(x,t)=&-\(\tfrac{V_{\Gamma_t} (P_{\Gamma_t}(x))}\eps +\p_t^\Gamma h_\eps(r,s,t)\)\p_\rho {\hat{w}}(\rho,s,t) +\p_t^\Gamma {\hat{w}}(r,\rho,s,t)\\
    \nabla w(x,t)=& \(\tfrac{\no_{\Gamma_t} (P_{\Gamma_t}(x))}  \eps -\nabla^\Gamma h_\eps(r,s,t)\)\p_\rho{\hat{w}}(\rho,s,t) +\nabla^\Gamma{\hat{w}}(r,\rho,s,t)\\
    \Delta w(x,t)=& (\eps^{-2}+|\nabla^\Gamma h_\eps(r,s,t)|^2) \p^2_\rho{\hat{w}}(\rho,s,t)\\
    &+\(\eps^{-1}\Delta d_\Gamma (x,t) -\Delta^\Gamma h_\eps(r,s,t)\)\p_\rho{\hat{w}}(\rho,s,t)\\
    &-2\nabla^\Gamma h_\eps(r,s,t)\cdot\nabla^\Gamma \p_\rho{\hat{w}}(r,\rho,s,t)+\Delta^\Gamma {\hat{w}}(r,\rho,s,t),
  \end{split}
\end{equation}
where $\rho$ is as in \eqref{eq:stretched} and $(r,s)$ is understood via \eqref{eq:1.42}.
\end{lem}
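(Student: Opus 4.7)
The identities are a direct chain-rule computation. The plan is to reduce everything to the formulas \eqref{Prelim:1.13}--\eqref{Prelim:1.12} already established in the preliminaries by introducing the auxiliary profile
\[
\tilde{\phi}(r,s,t) := \hat{w}\!\left(\tfrac{r}{\eps} - h_\eps(s,t),\, s,\, t\right),
\]
so that $w(x,t) = \tilde{\phi}(d_\Gamma(x,t), S(x,t), t)$ on $\Gamma(2\delta)$. Then \eqref{Prelim:1.13} applies verbatim, and the task is to rewrite $\p_r \tilde{\phi}$, $\p_r^2 \tilde{\phi}$, $\p_t^\Gamma \tilde{\phi}$, $\nabla^\Gamma \tilde{\phi}$ and $\Delta^\Gamma \tilde{\phi}$ in terms of $\hat{w}$ and its derivatives.

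Writing $\rho(r,s,t) = r/\eps - h_\eps(s,t)$, the chain rule gives $\p_r \tilde{\phi} = \eps^{-1} \p_\rho \hat{w}$, $\p_r^2 \tilde{\phi} = \eps^{-2}\p_\rho^2 \hat{w}$, and $\p_s \tilde{\phi} = -\p_s h_\eps \, \p_\rho \hat{w} + \p_s \hat{w}$ (with $\p_s \hat{w}$ understood at fixed $\rho$); analogously for $\p_t$. Multiplying these by $(\nabla S)$ and $\p_t S + \text{identity}$ as dictated by \eqref{Prelim:1.12}, and using the observation \eqref{eq:1.46} that $\nabla^\Gamma h_\eps = (\nabla S)\p_s h_\eps$ and $\p_t^\Gamma h_\eps = \p_t h_\eps + \p_t S \, \p_s h_\eps$, one obtains
\[
\nabla^\Gamma \tilde{\phi} = -\nabla^\Gamma h_\eps \, \p_\rho \hat{w} + \nabla^\Gamma \hat{w}, \qquad \p_t^\Gamma \tilde{\phi} = -\p_t^\Gamma h_\eps \, \p_\rho \hat{w} + \p_t^\Gamma \hat{w}.
\]
Substitution of these identities together with $\p_r \tilde{\phi} = \eps^{-1} \p_\rho \hat{w}$ into the first two lines of \eqref{Prelim:1.13} yields the first two claimed formulas after collecting the $\p_\rho \hat{w}$ coefficients.

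For the Laplacian the only nontrivial step is $\p_s^2 \tilde{\phi}$, which by a second application of the chain rule equals
\[
\p_s^2 \tilde{\phi} = -\p_s^2 h_\eps \, \p_\rho \hat{w} + (\p_s h_\eps)^2 \p_\rho^2 \hat{w} - 2\,\p_s h_\eps \, \p_s \p_\rho \hat{w} + \p_s^2 \hat{w}.
\]
Inserting this and $\p_s \tilde{\phi}$ into the definition $\Delta^\Gamma \tilde{\phi} = |\nabla S|^2 \p_s^2 \tilde{\phi} + (\Delta S)\p_s \tilde{\phi}$ and recognizing $|\nabla^\Gamma h_\eps|^2 = |\nabla S|^2 (\p_s h_\eps)^2$, $\Delta^\Gamma h_\eps = |\nabla S|^2 \p_s^2 h_\eps + (\Delta S)\p_s h_\eps$, and the cross term $|\nabla S|^2 \p_s h_\eps \, \p_s \p_\rho \hat{w} = \nabla^\Gamma h_\eps \cdot \nabla^\Gamma \p_\rho \hat{w}$, one obtains
\[
\Delta^\Gamma \tilde{\phi} = |\nabla^\Gamma h_\eps|^2 \p_\rho^2 \hat{w} - \Delta^\Gamma h_\eps \, \p_\rho \hat{w} - 2\nabla^\Gamma h_\eps \cdot \nabla^\Gamma \p_\rho \hat{w} + \Delta^\Gamma \hat{w}.
\]
Plugging this together with $\p_r^2 \tilde{\phi} = \eps^{-2}\p_\rho^2 \hat{w}$ and $\p_r \tilde{\phi} = \eps^{-1}\p_\rho \hat{w}$ into the third line of \eqref{Prelim:1.13} and grouping the $\p_\rho^2 \hat{w}$ and $\p_\rho \hat{w}$ coefficients gives the third formula.

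The computation is routine; the only bookkeeping obstacle is the second-order chain rule for $\p_s^2 \tilde{\phi}$, where one must keep track of the fact that the $s$-derivative of $\hat{w}(\rho,s,t)$ contains both an explicit $s$-derivative and a $\rho$-derivative coming from the dependence $\rho = r/\eps - h_\eps(s,t)$. All other terms are immediate consequences of \eqref{Prelim:1.13}, \eqref{Prelim:1.12} and \eqref{eq:1.46}.
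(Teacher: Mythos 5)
Your proposal is correct and is exactly the argument the paper intends: the paper's proof simply says the lemma "follows from the chain rule and \eqref{Prelim:1.13} in a straight forward manner," and you carry out precisely that reduction (via the auxiliary profile $\tilde{\phi}(r,s,t)=\hat{w}(r/\eps-h_\eps(s,t),s,t)$ and the definitions \eqref{Prelim:1.12}, \eqref{eq:1.46}) with all the bookkeeping done correctly, including the second-order chain rule for $\p_s^2\tilde{\phi}$ that produces the cross term $-2\nabla^\Gamma h_\eps\cdot\nabla^\Gamma\p_\rho\hat{w}$.
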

\begin{proof}
This follows from the chain rule and \eqref{Prelim:1.13} in a straight forward manner, cf.\ \cite[Section~4.2]{ChenHilhorstLogak}:  
\end{proof}

To treat remainder terms  the following set is useful:
\begin{defn}\label{defn:1.15}
  For any $k\in \R$ and $\alpha>0$,  $\mathcal{R}_{k,\alpha}$ denotes the vector space of all families of continuous functions $\tr_\eps\colon \R\times \Gamma(2\delta) \to \R$, $\eps\in (0,1)$, which are continuously differentiable with respect to $\no_{\Gamma_t}$ for all $t\in [0,T_0]$ such that
  \begin{equation}\label{eq:EstimRkalpha}
    |\partial_{\no_{\Gamma_t}}^j \tr_\eps(\rho,x,t)|\leq Ce^{-\alpha |\rho|}\eps^k\qquad \text{for all }\rho\in \R,(x,t)\in\Gamma(2\delta), j=0,1, \eps \in (0,1)
  \end{equation}
for some $C>0$ independent of $\rho\in \R,(x,t)\in\Gamma(2\delta)$, $\eps\in (0,1)$. Moreover,   $\mathcal{R}_{k,\alpha}$ is equipped  with the norm
\begin{equation*}
  \|(\tr_\eps)_{\eps\in (0,1)}\|_{\mathcal{R}_{k,\alpha}}= \sup_{\eps \in (0,1), (x,t)\in\Gamma(2\delta),\rho \in\R,j=0,1} |\partial_{\no_{\Gamma_t}}^j \tr_\eps(\rho,x,t)|e^{\alpha |\rho|}\eps^{-k}.
\end{equation*}
Finally, $\mathcal{R}_{k,\alpha}^0$ is the subspace of all $(\tr_\eps)_{\eps\in (0,1)}\in \mathcal{R}_{k,\alpha}$ such that
\begin{equation}\label{eq:RemainderVanish}
  \tr_\eps(\rho,x,t)= 0 \qquad \text{for all }\rho \in\R, x\in\Gamma_t, t\in [0,T_0].
\end{equation}
\end{defn}

The following corollary will be used to estimate the remainder terms in the equation for the approximate solution.
\begin{cor}\label{cor:repsEstim2}
  Let $k\in\R$, $\alpha>0$, $h_\eps \colon \T^1\times [0,T_0] \to \R$ such that
  \begin{equation*}
    M:= \sup_{{0<\eps < 1},(s,t)\in\T^1\times [0,T_0]} |h_\eps(s,t)|<\infty
  \end{equation*}
  and
  \begin{equation*}
    r_\eps (x,t):= \tr_\eps\left( \frac{d_{\G}(x,t)}\eps - h_\eps(S(x,t),t), x,t\right)\qquad \text{for all }(x,t)\in\Gamma(2\delta),\eps\in (0,1),
  \end{equation*}
  where $(\tr_\eps)_{0<\eps<1}\in \mathcal{R}_{k,\alpha}$ for some $\alpha>0$, $k\in\R$ and let $j=1$ if even $(\tr_\eps)_{0<\eps<1}\in \mathcal{R}_{k,\alpha}^0$ and $j=0$ else.
  Then there is some $C>0$ such that
  \begin{equation*}
    \left\| {a(P_{\Gamma_t}(\cdot))r_\eps} \right\|_{L^2(\G_t( 2\delta))} \leq C (1+M)^j\eps^{\frac 12+k+j} \|a\|_{L^2(\Gamma_t)}
  \end{equation*}
  uniformly for all $a\in L^2(\Gamma_t)$, $t\in [0,T_0]$, and $\eps\in (0,1)$.
\end{cor}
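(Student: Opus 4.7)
The plan is to reduce the $L^2$ norm on $\G_t(2\delta)$ to a one-dimensional integral along the stretched normal variable by changing to tubular-neighborhood coordinates and then rescaling. Parameterize $\G_t(2\delta)$ via $X(r,s,t)=X_0(s,t)+r\no(s,t)$ with $(r,s)\in(-2\delta,2\delta)\times\T^1$. Since $3\delta$ lies below the reach of $\Gamma_t$ and $X_0$ is smooth on the compact set $\T^1\times[0,T_0]$, the Jacobian $J(r,s,t)=|\partial_s X_0(s,t)|(1-r\kappa(s,t))$ of this change of variables is bounded above and below by positive constants uniformly in $t$. Under this substitution $P_{\Gamma_t}(X(r,s,t))=X_0(s,t)$, so $a(P_{\Gamma_t}(\cdot))$ pulls back to $(X_0^\ast a)(s,t)$, a function of $s$ alone whose $L^2(\T^1)$ norm is equivalent to $\|a\|_{L^2(\Gamma_t)}$. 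Substituting $\rho=r/\eps-h_\eps(s,t)$ in the $r$-integral contributes a factor $\eps$ and produces an integral in $\rho$ over a subset of $\R$, which for the decay estimates below can safely be enlarged to all of $\R$.

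\textbf{Case $j=0$.} Apply \eqref{eq:EstimRkalpha} with $j=0$ to obtain $|\tr_\eps(\rho,X(r,s,t),t)|\leq C\eps^k e^{-\alpha|\rho|}$, then square and integrate. Since $\int_\R e^{-2\alpha|\rho|}\,d\rho<\infty$, the $\rho$-integral contributes a constant, the $dr=\eps\,d\rho$ rescaling yields the factor $\eps^{2k+1}$, and the Jacobian together with the norm equivalence absorbs into a constant. Taking the square root gives the claim with exponent $\tfrac12+k$.

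\textbf{Case $j=1$.} Exploit the vanishing condition \eqref{eq:RemainderVanish}. For $x\in\G_t(2\delta)$ with projection $q=P_{\Gamma_t}(x)$, the segment $q+\sigma\no_{\Gamma_t}(q)$, $\sigma\in[0,d_\G(x,t)]$, stays inside $\G_t(2\delta)$ since every such point has distance $|\sigma|\leq|d_\G(x,t)|<2\delta$ to $\Gamma_t$, so
\begin{equation*}
\tr_\eps(\rho,x,t)=\int_0^{d_\G(x,t)}\partial_{\no_{\Gamma_t}}\tr_\eps(\rho,q+\sigma\no_{\Gamma_t}(q),t)\,d\sigma.
\end{equation*}
The bound in \eqref{eq:EstimRkalpha} with $j=1$ then gives $|\tr_\eps(\rho,x,t)|\leq C\eps^k|d_\G(x,t)|\,e^{-\alpha|\rho|}$. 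Because $d_\G(x,t)=\eps(\rho+h_\eps(s,t))$ and $|h_\eps|\leq M$, one has $|d_\G(x,t)|\leq\eps(|\rho|+M)\leq\eps(1+M)(1+|\rho|)$. Squaring and using $\int_\R(1+|\rho|)^2 e^{-2\alpha|\rho|}\,d\rho<\infty$ produces the extra factor $\eps^2(1+M)^2$, upgrading the bound to exponent $\tfrac12+k+1$ with prefactor $(1+M)$, as required.

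\textbf{Main obstacle.} There is no genuine obstacle; the argument is essentially bookkeeping. The two items that require care are the uniform control of the Jacobian of $X(\cdot,\cdot,t)$ over $t\in[0,T_0]$ (which follows from compactness of the time interval together with smoothness and non-degeneracy of $X_0$), and the verification in Case $j=1$ that the radial path used in the fundamental-theorem-of-calculus representation of $\tr_\eps$ remains inside $\G_t(2\delta)$, so that \eqref{eq:EstimRkalpha} is applicable along the whole segment.
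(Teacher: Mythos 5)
Your argument is correct: the change to tubular coordinates $(r,s)$ with uniformly controlled Jacobian, the rescaling $r=\eps(\rho+h_\eps(s,t))$ yielding the factor $\eps^{1/2}$, and in the case $j=1$ the fundamental-theorem-of-calculus representation along the normal segment combined with $|d_\Gamma|\leq \eps(1+M)(1+|\rho|)$ is exactly the standard proof of this estimate. The paper itself only cites \cite[Corollary~2.7]{StokesAllenCahn} here, and your write-up is a faithful, self-contained version of that argument with the two delicate points (uniformity of the Jacobian in $t$, and the normal segment staying inside $\Gamma_t(2\delta)$ so that \eqref{eq:EstimRkalpha} applies along it) correctly addressed.
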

\begin{proof}
  This is a special case of \cite[Corollary~2.7]{StokesAllenCahn}.
\end{proof}


\subsection{Construction of the Approximate Solution}\label{subsec:Construction}

We construct the approximate solution in the form
\begin{alignat}{2}\label{eq:cA}
    c_{A}(x,t)&=\zeta(d_\Gamma(x,t){)} c^{in}(x,t)+(1-\zeta(d_\Gamma(x,t)))\left(
  \chi_{\Omega^+}(x,t)-\chi_{\Omega^-}(x,t) \right)
\end{alignat}
for all $x\in \Om$, $t\in [0,T_0]$,
where $\zeta\in C^\infty(\R)$ satisfies
\begin{equation}\label{eq:1.34}
  \zeta(s)=1~\text{if}~|s|\leq\delta; ~\zeta(s)=0~\text{if}~|s|\geq 2\delta;~ 0\leq  -s\zeta'(s) \leq 4~\text{if}~ \delta\leq |s|\leq 2\delta.
\end{equation}
We construct $c^{in}\colon \Gamma(3\delta)\to \R$ by an ``inner expansion'' and it is of the form
\begin{equation}\label{eq:innerexpan}
\begin{split}
  c^{in} (x,t)=\tc^{in}(\rho,s,t)=&\, \theta_0(\rho) + \eps \tc_1(\rho,S(x,t),t) + \eps^2 \tc_2\(\rho,S(x,t),t\)\\
  =:&\, c_0^{in}(x,t)+\eps c_1^{in}(x,t)+\eps^2c_2^{in}(x,t)\quad \text{for all }(x,t)\in \Gamma(3\delta),
\end{split}
\end{equation}
 where $\rho$ is defined as in \eqref{eq:stretched}.
 Here and in the following $x$ and  $(\rho,s)$ are related by \eqref{eq:stretched} and $s=S(x,t)$ if both variables appear. The functions $\tc_j \colon \R \times \T^1 \times [0,T_0]\to \R$ and $h_j\colon \T^1\times [0,T_0]\to \R$, $j=1,2$, will be determined in the following to obtain an approximate solution of order $O(\eps^{2+\frac12})$ in $L^2(\Omega\times (0,T_0))$. We note that in the present case the ``outer expansion'', i.e., the values of $c_A$ in $\Omega^\pm \setminus \Gamma(3\delta)$, is simply $\pm 1$.

 We note that
 \begin{equation*}
  (\nabla S)(x,t)\cdot \no(S(x,t),t)=0 \qquad \text{for all }(x,t)\in \Gamma (3\delta),
\end{equation*}
which is obtained by differentiating
\begin{equation*}
  S(X_0(s,t)+r\no (s,t))=s\qquad \text{for all }s\in \mathbb{T}^1,t\in [0,T_0], r\in (-3\delta,3\delta)
\end{equation*}
with respect to $r$ at $r=0${.}
 Thus we obtain from \eqref{eq:1.46} 
\begin{equation*}
  \left|\tfrac{\no(s,t)}\eps -{(\nabla^\Gamma h_\eps)(r,s,t)} \right|^2
= \frac{1}{\eps^2} + |\nabla^\Gamma  h_\eps(r,s,t) |^2
\end{equation*}
for all $r\in(-3\delta,3\delta), s=S(x,t)\in \mathbb{T}^1, t\in [0,T_0]$.
Hence \eqref{eq:formula1}, \eqref{eq:1.58}, and \eqref{eq:1.57} yield
\begin{equation}\label{eq:expanca1}
 \begin{split}
    \partial_t c_0^{in}(x,t)&= \theta_0'(\rho)\( -\tfrac{V(s,t)}\eps
    - \p_t^\Gamma  h_\eps(r,s,t)
    \){,}\\
  \nabla c_0^{in}(x,t)&= \theta_0'(\rho)\left( \tfrac{\no(s,t)}\eps  - \nabla^\Gamma h_\eps(r,s,t) \right){,}\\
 \eps\Delta c_0^{in}(x,t)&=\theta_0''(\rho) \(\tfrac1{\eps} +\eps|\nabla^\Gamma h_\eps(r,s,t)|^2\) + \theta_0'(\rho)\left( \Delta d_{\Gamma}(x,t) -\eps\Delta^\Gamma h_\eps(r,s,t)\right)
\end{split}
\end{equation}
and similarly, for $k=1,2$,
\begin{equation}\label{eq:expanca2}
  \begin{split}
    \partial_t c_k^{in}(x,t)&= \( -\tfrac{V(s,t)}\eps
    - \p_t^\Gamma  h_\eps(r,s,t)
    \)\partial_\rho \tc_k(\rho,s,t) +  \p_t^\Gamma \tc_k(r,\rho,s,t){,}\\
  \nabla c_k^{in}(x,t)&= \left( \tfrac{\no}\eps  - \nabla^\Gamma h_\eps(r,s,t) \right)\partial_\rho \tc_k(\rho,s,t) +   \nabla^\Gamma \tc_k(r,\rho,s,t){,}\\
  \eps\Delta c_k^{in}(x,t)&=  (\eps^{-1}+\eps|\nabla^\Gamma h_\eps(r,s,t)|^2)\partial_\rho^2\tc_k(\rho,s,t)\\
  &\quad + \left( \Delta d_{\Gamma}(x,t) -  \eps\Delta^\Gamma h_\eps(r,s,t)  \right)\partial_\rho \tc_k(\rho,s,t)\\
& \quad -2\eps \nabla^\Gamma h_\eps(r,s,t)\cdot  \nabla^\Gamma \partial_\rho \tc_k(r,\rho,s,t) + \eps\Delta^\Gamma  \tc_k(r,\rho,s,t){.}
  \end{split}.
\end{equation}
By a Taylor expansion we obtain
\begin{align*}
  \frac1{\eps} f'(c^{in}(x,t))&= \frac1{\eps} f'(\theta_0(\rho)) +f''(\theta_0(\rho)) \tc_1(\rho,s,t)+\eps f''(\theta_0(\rho)) \tc_2(\rho,s,t)+ \eps^2\mathfrak{R},
\end{align*}
where
\begin{equation*}
  \mathfrak{R}   = \frac12f'''\(\theta_0(\rho)+ \xi(\rho,s,t)({\eps}\tc_1+\eps^{{2}}\tc_2)(\rho,s,t)\)\left(\tc_1(\rho,s,t)+\eps\tc_2(\rho,s,t)\right)^2
\end{equation*}
for some $\xi(\rho,s,t)\in [0,1]$.
This together with  \eqref{eq:expanca1} and \eqref{eq:expanca2} lead to
\begin{alignat}{1}\nonumber
    &\p_t c^{in}(x,t)+\ve(x,t)\cdot \nabla c^{in}(x,t)-\eps\Delta c^{in}(x,t)+\frac1{\eps} f'(c^{in}(x,t))\\\nonumber
  &= \frac {-V(s,t)+\ve(x,t)\cdot \no(s,t)}\eps \theta_0'+ (-V(s,t)+\ve(x,t)\cdot \no(s,t)) \partial_\rho \tc_1\\\nonumber
  & \quad +(-\p_t^\Gamma h_1-\ve\cdot \nabla^\Gamma h_1-\Delta d_{\Gamma})\theta_0'-\p_\rho^2 \tc_1+f''(\theta_0) \tc_1-\eps|\nabla_\Gamma h_1|^2\theta_0''\\
  &\quad +\eps(-\p_t^\Gamma h_2-\ve\cdot \nabla^\Gamma h_2+\Delta^\Gamma h_1)\theta_0'-\eps\big(\p_\rho^2\tc_2+f''(\theta_0)\tc_2\big)\label{eq:monst1}
  +R_1^0 +R_2{,} 
\end{alignat}
where $R_1^0\in \mathcal{R}_{1,\alpha}^0$ and $R_2\in \mathcal{R}_{2,\alpha}$. Here and in the following all functions as e.g.\
$
 \theta_0',  \tc_1, h_1, \nabla^\Gamma \tc_1, \nabla^\Gamma h_1
$
without arguments are evaluated at $\rho, (\rho,s,t)$, $(s,t)$, $(r,\rho,s,t)$, $(r,s,t)$, respectively.
 Moreover, we have used $-\theta_0''(\rho)+ f'(\theta_0(\rho))=0$ for all $\rho\in\R$ as well as $\eps (V-\ve\cdot \no)\p_\rho \tc_2\in \mathcal{R}_{1,\alpha}^0$ and $\eps (|\nabla^\Gamma h_1|^2-|\nabla_\Gamma h_1|^2)\theta_0''$ since $V(s,t)-\ve(x,t)\cdot \no(s,t)=0$ and $|\nabla^\Gamma h_1|^2-|\nabla_\Gamma h_1|^2=0$ if $x=X_0(s,t)$.

First we eliminate the terms of order $O(\eps^{-1})$ on the right hand side of \eqref{eq:monst1}.
Using a Taylor expansion around $r=d_\Gamma=0$ and $d_\Gamma=\eps (\rho+h_\eps)$ we have
  \begin{equation}\label{eq:ex2}
    -V(s,t)+\no\cdot X_0^\ast(\ve)(s,t)  = \eps(\rho+h_\eps)\kappa_1(s,t)+ \eps^2 \kappa_2(s,t)(\rho+h_1)^2+\eps^3 \kappa_{3,\eps}(\rho,s,t)
  \end{equation}
  for some smooth and uniformly bounded $\kappa_1, \kappa_2, \kappa_{3,\eps}$.
 Therefore we arrive at
\begin{alignat}{1}\nonumber
&\p_t c^{in}+\ve\cdot\nabla c^{in}-\eps\Delta c^{in} +\frac1{\eps} f'(c^{in})\\\nonumber
  &=\left(h_1\kappa_1  -\p_t^\Gamma h_1-\ve\cdot \nabla^\Gamma h_1-\Delta d_\Gamma \right)\theta_0'\\\nonumber
  &\ -\p_\rho^2 \tc_1+f''(\theta_0) \tc_1-|\nabla_\Gamma h_1|^2\theta_0''+ \rho \theta_0'(\rho)\kappa_1 + \eps (-\p_\rho^2 \tc_2+f''(\theta_0) \tc_2)+\eps(\rho + h_1\kappa_1)\p_\rho \tc_1\\
  &\ +\eps(-\p_t^\Gamma h_2-\ve\cdot \nabla^\Gamma h_2+  \kappa_1 h_2+\Delta^\Gamma h_1+ \kappa_2(\rho+h_1)^2)\theta_0' 
 \label{eq:monst1'}
 + R_1^0 + R_2
\end{alignat}
in $\Gamma(2\delta)$ for some $R_1^0 \in \mathcal{R}_{1,\alpha}^0$, $R_2\in \mathcal{R}_{2,\alpha}$.
Now, we want to eliminate all terms of order $O(1)$.
To this end we choose $h_1$ as the solution of
  \begin{equation}\label{eq:h1equ}
       D_t  h_1 +X_0^\ast(\ve) \cdot  \nabla_\Gamma h_1    -\kappa_1 h_1 = \Delta_\Gamma d_\Gamma\quad \text{on }\mathbb{T}^1\times [0,T_0]
   \end{equation}
   together with  $h_1|_{t=0}=0$. Then using a Taylor expansion similarly as before
   \begin{equation*}
     \left(h_1\kappa_1  -\p_t^\Gamma h_1(r,s,t)-\ve\cdot \nabla^\Gamma h_1-\Delta d_\Gamma \right)\theta_0'
     = \eps (\rho+h_1)b(s,t)\theta_0'(\rho) + R_1^0
\end{equation*}
for some $R_1^0 \in \mathcal{R}_{1,\alpha}^0$ and some smooth $b$.
This changes \eqref{eq:monst1'} into
\begin{alignat}{1}\nonumber
&\p_t c^{in}+\ve\cdot\nabla c^{in}-\eps\Delta c^{in} +\frac1{\eps} f'(c^{in})\\\nonumber
  &= -\p_\rho^2 \tc_1+f''(\theta_0) \tc_1-|\nabla_\Gamma h_1|^2\theta_0''+ \rho \theta_0'(\rho)\kappa_1 +\eps\left(-\p_\rho^2\tc_2f''(\theta_0)\tc_2\right)+\eps(\rho + h_1{\kappa_1)}\p_\rho \tc_1\\
  &\ +\eps\left(-\p_t^\Gamma h_2-\ve\cdot \nabla^\Gamma h_2+  \kappa_1 h_2+\Delta^\Gamma h_1+ \kappa_2(\rho+h_1)^2+(\rho+h_1)b\right)\theta_0'
 + R_1^0 + R_2  \label{eq:monst2}
\end{alignat}
in $\Gamma(2\delta)$ for some $R_1^0 \in \mathcal{R}_{1,\alpha}^0$, $R_2\in \mathcal{R}_{2,\alpha}$.

In order to eliminate the remaining $O(1)$ terms, we only need to choose $\tc_1$ such that
\begin{equation*}
  -\p_\rho^2 \tc_1+f''(\theta_0) \tc_1=|\nabla_\Gamma h_1|^2\theta_0''(\rho)- \theta_0'(\rho)   \rho \kappa_1
\end{equation*}
for all $(\rho,s,t)\in\R\times \T^1\times [0,T_0]$. The solvability is guaranteed by:
\begin{prop}\label{prop:solveode}
  Assume that $g\colon \R\times\T^1\times [0,T_0]\rightarrow\R$ and $g^\pm\colon \T^1\times [0,T_0]\rightarrow\R $ are smooth 
 and for some $i\in\N_0$, $a>0$ satisfy
  \begin{equation*}
    \sup_{(s,t)\in \T^1\times [0,T_0]}\left|\p_\rho^k\partial_s^l \p_t^m[g(\rho,s,t)-g^\pm(s,t)]\right|\leq C_{k,l,m}(1+|\rho|)^ie^{-a|\rho|}\quad \text{for all } \rho\gtrless 0.
  \end{equation*}
  for all $k,l,m\in \mathbb{N}_0$ 
  and some $C_{k,l,m}>0$. Then for given $(s,t)\in \T^1\times [0,T_0]$ the ODE
  \begin{equation*}
    -\p^2_\rho u+f''(\theta_0)u=g(\cdot ,s,t)~\text{in}~\mathbb{R},\quad u(0,s,t)=0,
  \end{equation*}
  has a unique bounded solution $u(\cdot,s,t)$ if and only if
  \begin{equation}\label{lem:CompCondODE}
    ~\int_\mathbb{R}g(\rho,s,t)\theta'_0(\rho) \sd \rho=0. 
  \end{equation}
  If the solution exists for all $(s,t)\in \T^1\times [0,T_0]$, then for all $(k,l,m)\in \mathbb{N}_0^3$ 
  there is some $C_{k,l,m}$ such that
   \begin{equation}\label{eq:matching1}
    \sup_{(s,t)\in \T^1\times [0,T_0]}\left|\p_\rho^k\partial_s^l \p_t^m\left(u(\rho,s,t)-\frac{g^\pm(s,t)}{f''(\pm 1)}\right)\right|\leq C_{k,l,m}(1+|\rho|)^ie^{-a|\rho|}\quad\text{for all }\rho\gtrless 0.
  \end{equation}
\end{prop}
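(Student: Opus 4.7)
The plan is to build on the fact that differentiating \eqref{eq:OptProfile} gives $L\theta_0' = 0$ where $L := -\partial_\rho^2 + f''(\theta_0)$, so $\theta_0'$ is an exponentially decaying solution of the homogeneous ODE by \eqref{eq:decayopti}. Reduction of order yields a linearly independent second solution $\psi(\rho) = \theta_0'(\rho)\int_0^\rho (\theta_0'(\sigma))^{-2}\,d\sigma$ with constant Wronskian $W(\theta_0',\psi) = 1$; from \eqref{eq:decayopti} one reads off that $\psi$ grows like $e^{\alpha|\rho|}$ at $\pm\infty$. Necessity of \eqref{lem:CompCondODE} is then immediate: if $u(\cdot,s,t)$ is bounded and $Lu = g$, multiplying by $\theta_0'$ and integrating by parts twice on $\mathbb{R}$, using $L\theta_0'=0$ and the exponential decay of $\theta_0'$ and $\theta_0''$, gives $\int_\mathbb{R}g\,\theta_0'\,d\rho = 0$.

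For sufficiency, I would form a particular solution by variation of parameters,
\[
u_p(\rho) = \psi(\rho)\int_\rho^{\infty}\theta_0'(\sigma)g(\sigma)\,d\sigma + \theta_0'(\rho)\int_0^\rho \psi(\sigma)g(\sigma)\,d\sigma,
\]
in which the upper limit at $+\infty$ is chosen so that the growing mode $\psi$ is annihilated as $\rho\to+\infty$. The compatibility condition \eqref{lem:CompCondODE} is exactly what keeps $u_p$ bounded at $-\infty$ as well, since it allows one to rewrite $\int_\rho^{\infty}\theta_0' g = -\int_{-\infty}^\rho \theta_0' g$ and expose an exponentially small prefactor of $\psi$ there also. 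Boundedness of the $\theta_0'\int_0^\rho \psi g$ piece is handled by splitting $g = (g-g^\pm) + g^\pm$ for $\rho \gtrless 0$ and exploiting the exponential decay of $\theta_0'$ together with the fact that $\psi(g-g^\pm)$ is integrable. Since $\theta_0'$ is the unique bounded homogeneous solution up to scaling, adjusting by a multiple of it normalizes $u(0,s,t)=0$, yielding existence and uniqueness.

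For the matching estimate \eqref{eq:matching1} I would set $w^\pm(\rho) := u(\rho,s,t) - g^\pm(s,t)/f''(\pm 1)$ for $\rho \gtrless 0$, so that
\[
L w^\pm = (g - g^\pm) + \frac{g^\pm}{f''(\pm 1)}\bigl(f''(\pm 1) - f''(\theta_0)\bigr),
\]
where both summands decay as $(1+|\rho|)^i e^{-a|\rho|}$ by the hypothesis on $g$ and \eqref{eq:decayopti} (with $a\le\alpha$, which is automatic in the applications here). Plugging this source into the variation-of-parameters representation and estimating the resulting convolutions propagates the claimed decay to $w^\pm$ itself. The $(s,t)$-derivative bounds follow by differentiating under the integral sign—the compatibility condition \eqref{lem:CompCondODE} is preserved by such differentiation, so no new obstruction appears—and the $\rho$-derivative bounds come from the ODE itself. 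The main technical nuisance is bookkeeping the polynomial factor $(1+|\rho|)^i$ through the variation-of-parameters integrals without losing powers; the standard remedy is to split each integral at $|\rho|/2$, so that in each half either the exponential decay of $\theta_0'$ or the short integration range absorbs the polynomial growth.
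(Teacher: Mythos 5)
The paper does not prove this proposition itself but simply cites \cite{ChenHilhorstLogak} (Lemma~3), and your variation-of-parameters argument --- $\theta_0'$ as the decaying homogeneous solution, the reduction-of-order partner $\psi(\rho)=\theta_0'(\rho)\int_0^\rho(\theta_0'(\sigma))^{-2}\,d\sigma$, the compatibility condition \eqref{lem:CompCondODE} used both for necessity (pairing with $\theta_0'$ and integrating by parts) and to suppress the growing mode at $-\infty$, and the decomposition $w^\pm=u-g^\pm/f''(\pm1)$ for the matching estimate --- is precisely the standard proof of that lemma, and it is correct. The only caveat is the one you already flag: obtaining the conclusion with the same exponential rate $a$ and the same polynomial power $i$ requires $a$ strictly below $\sqrt{f''(\pm1)}$ (at equality the integral $\int_0^\rho(1+\sigma)^i e^{(\alpha-a)\sigma}\,d\sigma$ costs one extra power of $\rho$), which is harmless in the applications in this paper since there $g$ is built from $\theta_0'$, $\theta_0''$ and $\tc_1$ and one may take $a$ slightly smaller than $\alpha$.
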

\begin{proof}
  See e.g.\ \cite[Lemma 3]{ChenHilhorstLogak}.
\end{proof}

We note that in the present application to $\tc_1$ and $\tc_2$ below $g^\pm \equiv 0$. Thus we obtain for all $k,l,m\in\N_0$ that there is some $C_{k,l,m}>0$ such that
\begin{equation}\label{eq:Estimc1}
\sup_{(s,t)\in \T^1\times [0,T_0]}\left|\p_\rho^k\partial_s^l \p_t^m\tc_1(\rho,s,t)\right|\leq C_{k,l,m}(1+|\rho|)e^{-\alpha|\rho|}\quad\text{for all }\rho\in\R,
\end{equation}
where $\alpha$ is as in \eqref{eq:decayopti}.
In particular we have exponential decay as $|\rho|\to \infty$.

With this choice  \eqref{eq:monst2} simplifies to
\begin{alignat}{1}\nonumber
&\p_t c^{in}+\ve\cdot\nabla c^{in}-\eps\Delta c^{in} +\frac1{\eps} f'(c^{in})\\\nonumber
&=  \eps\left(\Delta^\Gamma h_1-\p_t^\Gamma h_2-\ve\cdot \nabla^\Gamma h_2+  \kappa_1 h_2- g\right)\theta_0'\\
&\quad +\eps\left(  (g+\kappa_2(\rho+h_1)^2+(\rho+h_1)b)\theta_0'+(\rho + h_1)\kappa_1\p_\rho \tc_1-\p_\rho^2\tc_2 +f''(\theta_0)\tc_2\right) 
\\&\quad + R_1^0 + R_2 \qquad \text{in }\Gamma(2\delta)
\end{alignat}
for some $R_1^0 \in \mathcal{R}_{1,\alpha}^0$, $R_2\in \mathcal{R}_{2,\alpha}$.
Here  we added a term $g=g(s,t)$ in order to satisfy the compatibility condition for the equation for $\tc_2$ in the sequel. 
Now we choose  $h_2$ as the solution of
\begin{alignat}{1}\label{eq:h2equ}
   & D_t  h_2  +   X_0^\ast(\ve)\cdot \nabla_\Gamma h_2 - \kappa_1h_2   =   \Delta_\Gamma h_1-g- \kappa_1h_2
\end{alignat}
on $\T^1\times [0,T_0]$
together with the initial condition $h_2|_{t=0}=0$.
Existence of a solution follows from Proposition~\ref{prop:solveode}.

It remains to eliminate all the $O(\eps)$ terms by solving the following equation for $\tc_2$:
\begin{alignat}{1}
&-\p_\rho^2\tc_2+f''(\theta_0)\tc_2=-(g+\kappa_2(\rho+h_1)^2+(\rho+h_1)b)\theta_0'-(\rho + h_1)\kappa_1\p_\rho \tc_1{.} \label{eq:monst4}
\end{alignat}
This system is solvable if and only we choose $g$ such that
\begin{equation}\label{eq:deterb}
g(s,t)\int_\R(\theta_0')^2\sd \rho
  =-\int_\R\left[(\kappa_2(\rho+h_1)^2+(\rho+h_1)b)\theta_0'+(\rho + h_1)\kappa_1\p_\rho \tc_1   \right]\theta_0'(\rho)\sd \rho{.}
\end{equation}
With this choice of $g$ we obtain by  Proposition~\ref{prop:solveode} the existence of $\tc_2$, which satisfies for arbitrary $k,l,m\in\N_0$ the estimate
\begin{equation}\label{eq:Estimc2}
\sup_{(s,t)\in \T^1\times [0,T_0]}\left|\p_\rho^k\partial_s^l \p_t^m\tc_2(\rho,s,t)\right|\leq C_{k,l,m}(1+|\rho|^2)e^{-a|\rho|}\quad\text{for all }\rho\in\R
\end{equation}
for some $C_{k,l,m}>0$.

Finally, we arrive at
\begin{alignat}{1}\label{eq:ApproxS3}
&\p_t c^{in}+\ve\cdot\nabla c^{in}-\eps\Delta c^{in} +\frac1{\eps} f'(c^{in})=  R_1^0 + R_2\quad \text{in }\Gamma(2\delta)
\end{alignat}
 for some $R_1^0 \in \mathcal{R}_{1,\alpha}^0$, $R_2\in \mathcal{R}_{2,\alpha}$. Altogether we obtain
\begin{thm}
  Let $c_A$ be constructed as before. Then
  \begin{alignat}{1}\label{eq:ApproxS4}
    &\p_t c_A+\ve\cdot\nabla c_A-\eps\Delta c_A +\frac1{\eps} f'(c_A)=  S_\eps\quad \text{in }\Omega \times [0,T_0],
  \end{alignat}
  where
  \begin{equation*}
    \|S_\eps\|_{L^2(\Omega\times (0,T_0))}\leq C\eps^{2+\frac12}\qquad \text{for all }\eps \in (0,1].
  \end{equation*}
\end{thm}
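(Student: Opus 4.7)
The plan is to split $\Omega\times(0,T_0)$ into three regions according to the support of the cutoff $\zeta(d_\Gamma)$ and estimate $S_\eps$ on each.

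First, on $\Gamma(\delta)$ one has $\zeta(d_\Gamma)\equiv 1$, so $c_A=c^{in}$ and the identity \eqref{eq:ApproxS3} directly yields $S_\eps=R_1^0+R_2$ with $R_1^0\in\mathcal{R}_{1,\alpha}^0$ and $R_2\in\mathcal{R}_{2,\alpha}$. Applying Corollary~\ref{cor:repsEstim2} with $a\equiv 1$ (using $j=1$ for $R_1^0$ and $j=0$ for $R_2$) gives $\|R_1^0\|_{L^2(\Gamma_t(2\delta))}+\|R_2\|_{L^2(\Gamma_t(2\delta))}\leq C\eps^{5/2}$ uniformly in $t\in[0,T_0]$, and integration in time yields the desired $L^2$ bound on this region.

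Second, on $\Omega\setminus\Gamma(2\delta)$ the cutoff vanishes, so $c_A\equiv\pm 1$ locally; all space-time derivatives vanish and $f'(\pm 1)=0$, giving $S_\eps\equiv 0$.

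The main work is in the transition strip $\Gamma(2\delta)\setminus\Gamma(\delta)$. Writing $\sigma=\chi_{\Omega^+}-\chi_{\Omega^-}$ (locally constant there) and $c_A-c^{in}=(1-\zeta(d_\Gamma))(\sigma-c^{in})$, I would expand $S_\eps$ using \eqref{eq:ApproxS3} for the $c^{in}$-part (valid throughout $\Gamma(2\delta)$) plus cutoff corrections involving derivatives of $\zeta(d_\Gamma)$ multiplied by $c^{in}-\sigma$ and its derivatives, together with $\eps^{-1}(f'(c_A)-f'(c^{in}))$. Since $|d_\Gamma|\geq\delta$ and $h_\eps$ is uniformly bounded, the stretched variable satisfies $|\rho_\eps|\geq\delta/(2\eps)$ for all sufficiently small $\eps$, and the decay estimates \eqref{eq:decayopti}, \eqref{eq:Estimc1}, \eqref{eq:Estimc2} yield $|c^{in}-\sigma|$ together with all relevant derivatives bounded by $Ce^{-\alpha\delta/(4\eps)}$, even after absorbing polynomial factors of $1/\eps$. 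A Taylor expansion of $f'$ at $\pm 1$, using $f'(\pm 1)=0$, gives $\eps^{-1}|f'(c_A)|\leq C\eps^{-1}e^{-\alpha\delta/(4\eps)}$. Combined with the uniform boundedness of $\zeta'$ and $\zeta''$, this produces $|S_\eps|\leq Ce^{-c_0/\eps}$ pointwise in the strip for some $c_0>0$, which is far smaller than any polynomial power of $\eps$.

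Summing the three contributions completes the bound $\|S_\eps\|_{L^2(\Omega\times(0,T_0))}\leq C\eps^{5/2}$. The main obstacle is the careful bookkeeping of the cutoff corrections in the transition strip — verifying that each piece, in particular $\eps^{-1}f'(c_A)$ with its singular prefactor, is indeed exponentially small — but this is routine given the exponential decay of $\theta_0\mp 1$, $\tc_1$ and $\tc_2$ at infinity.
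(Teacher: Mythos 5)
Your proposal is correct and follows essentially the same route as the paper: the bound on $\Gamma(\delta)$ comes from the identity \eqref{eq:ApproxS3} together with Corollary~\ref{cor:repsEstim2} applied to $R_1^0\in\mathcal{R}_{1,\alpha}^0$ (with $j=1$) and $R_2\in\mathcal{R}_{2,\alpha}$ (with $j=0$), while the cutoff corrections and the term $\eps^{-1}\bigl(f'(c_A)-\zeta(d_\Gamma)f'(c^{in})\bigr)$, supported where $|\rho|\gtrsim\delta/\eps$, are exponentially small by \eqref{eq:decayopti}, \eqref{eq:Estimc1} and \eqref{eq:Estimc2}. The only cosmetic difference is that you organize the argument by a decomposition of $\Omega$ into three regions rather than by expanding the single global identity with $\zeta(d_\Gamma)$, $\zeta'(d_\Gamma)$, $\zeta''(d_\Gamma)$ as the paper does; the substance is identical.
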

\begin{proof}
  Because of the definition \eqref{eq:cA}
  \begin{align*}
    &\p_t c_A+\ve\cdot\nabla c_A-\eps\Delta c_A +\frac1{\eps} f'(c_A)\\
    &\quad = \zeta(d_\Gamma)\left(\p_t c^{in}+\ve\cdot\nabla c^{in}-\eps\Delta c^{in} +\frac1{\eps} f'(c^{in})\right)\\
    &\qquad  + \zeta'(d_\Gamma)\left(\p_t d_\Gamma+\ve\cdot \nabla d_\Gamma -\eps \Delta d_\Gamma\right)(c^{in} -\chi_{\Omega^+}+\chi_{\Omega^-})\\
    &\qquad - \eps\zeta''(d_\Gamma)|\nabla d_\Gamma|^2(c^{in} -\chi_{\Omega^+}+\chi_{\Omega^-})+ \frac1\eps \left(f'(c_A)-\zeta(d_\Gamma)f'(c_A^{in})\right) =: S_\eps.
  \end{align*}
  Here $\zeta'(d_\Gamma)$ and $\zeta''(d_\Gamma)$ are supported in $\ol{\Gamma(2\delta)}\setminus \Gamma(\delta)$, where $|\rho|\geq \frac{c_0}\eps$ for some $c_0>0$. Hence using
  \begin{equation*}
    |\tc^{in}(\rho,s,t)\mp 1|\leq Ce^{-\alpha |\rho|}\qquad \text{for all }\rho \gtrless 0
  \end{equation*}
  because of the properties of $\theta_0$, \eqref{eq:Estimc1}, and \eqref{eq:Estimc2}, we obtain that the terms with  $\zeta'(d_\Gamma)$ and $\zeta''(d_\Gamma)$ decay exponentially as $\eps\to 0$ (in any $C^k$-norm). Moreover, we note that $f'(c_A)-\zeta(d_\Gamma)f'(c_A^{in})$ is supported in $\ol{\Gamma(2\delta)}\setminus \Gamma(\delta)$ since $f'(\pm 1)=0$. Moreover, using \eqref{eq:decayopti} one can show that this term decays exponentially as $\eps\to 0$ as well.
  Now using \eqref{eq:ApproxS3} together with Corollary~\ref{cor:repsEstim2}, we conclude
\begin{equation*}
  \|S_\eps\|_{L^2(\Omega\times (0,T_0))}\leq C\eps^{2+\frac12}\qquad \text{for all }\eps \in (0,1].
\end{equation*}
\end{proof}

For the following let us denote by
\begin{alignat*}{2}
    c_{A,0}&=\zeta\circ d_\Gamma \theta_0(\rho)+(1-\zeta\circ d_\Gamma)\left(
  \chi_{\Omega^+}-\chi_{\Omega^-} \right)&\quad& \text{in } \Om\times [0,T_0]
\end{alignat*}
the leading part of $c_A$. Then we have the following spectral estimate.
\begin{thm}\label{thm:Spectral}
Let $c_{A,0}$ be as above. Then there are some $C,\eps_0>0$  such that for every $u\in H^1(\Om)$, $t\in[0,T_0]$, and $\eps\in (0,\eps_0]$ we have
  \begin{align*}
    &\int_{\Om}\left(|\nabla u(x)|^2+ \eps^{-2}f''(c_{A,0}(x,t))u(x)^2\right)\sd x\\
    &\quad \geq -C\int_\Om u(x)^2 \sd x + \int_{\Om\setminus \Gamma_t(\delta)} |\nabla u(x)|^2\sd x +  \int_{\Gamma_t({\delta})} |\nabla_\btau u(x)|^2\sd x.
  \end{align*}
\end{thm}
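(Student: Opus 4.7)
The plan is to exploit the classical spectral gap of the linearized Allen--Cahn operator around the optimal profile $\theta_0$, after localizing to the tubular neighborhood $\Gamma_t(\delta)$ and reducing the quadratic form to a one-dimensional problem in the normal direction.

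First I would split $\Om = (\Om \setminus \Gamma_t(\delta)) \cup \Gamma_t(\delta)$. On the bulk $\Om \setminus \Gamma_t(\delta)$ the exponential decay \eqref{eq:decayopti} together with the definition of $c_{A,0}$ forces $c_{A,0}(\cdot,t)$ to be exponentially close to $\pm 1$ for $\eps$ small enough, so that $f''(c_{A,0}) \geq c_0 > 0$ uniformly. The integrand is then pointwise bounded below by $|\nabla u|^2 + c_0 \eps^{-2} u^2$, directly yielding the $\int_{\Om \setminus \Gamma_t(\delta)} |\nabla u|^2 \sd x$ contribution required on the right-hand side (and even an extra positive $\eps^{-2} u^2$ term there).

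Inside $\Gamma_t(\delta)$ I would introduce the tubular coordinates $(r,s)$ from Section~\ref{sec:Prelim} via the diffeomorphism $X$. The Jacobian is smooth and bounded away from $0$, and the Euclidean gradient decomposes as $|\nabla u|^2 = |\p_r u|^2 + |\nabla_\btau u|^2$ up to lower-order curvature corrections. Retaining $|\nabla_\btau u|^2$ on the right-hand side reduces the claim, for each fixed $s \in \mathbb{T}^1$, to the one-dimensional estimate
\begin{equation*}
\int_{-\delta}^{\delta} \left(|\p_r u(r,s)|^2 + \eps^{-2} f''(\theta_0(r/\eps)) u(r,s)^2\right) \sd r \geq -C \int_{-\delta}^{\delta} u(r,s)^2 \sd r,
\end{equation*}
with $C$ independent of $s$ and $\eps$. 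The discrepancy between $c_{A,0}$ and $\theta_0(d_\Gamma/\eps)$ inside $\Gamma_t(\delta)$ is supported in $\Gamma_t(2\delta)\setminus \Gamma_t(\delta)$ where both are exponentially close to $\pm 1$, so $f''(c_{A,0}) - f''(\theta_0(d_\Gamma/\eps))$ is negligible and absorbed into $-C \int u^2$.

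For the one-dimensional estimate I would rescale $\rho = r/\eps$, turning the operator into $\eps^{-2} L_0$ with $L_0 = -\p_\rho^2 + f''(\theta_0(\rho))$ on the large interval $(-\delta/\eps, \delta/\eps)$. Differentiating \eqref{eq:OptProfile} gives $L_0 \theta_0' = 0$, and the classical spectral gap of $L_0$ on $\R$ furnishes $L_0 \geq \lambda_1 > 0$ on the $L^2(\R)$-orthogonal complement of $\theta_0'$. Decomposing $u(\cdot, s) = \alpha(s)\, \theta_0'(\cdot/\eps) + u^\perp(\cdot, s)$ with $u^\perp$ orthogonal to $\theta_0'(\cdot/\eps)$ in $L^2(-\delta, \delta)$, the orthogonal part is controlled by the spectral gap while the kernel contribution vanishes in the limit quadratic form and only produces a bounded $-C \int u^2$ term, using $|\alpha(s)|^2 \|\theta_0'(\cdot/\eps)\|_{L^2}^2 \leq \|u(\cdot,s)\|_{L^2}^2$ and $\|\theta_0'(\cdot/\eps)\|_{L^2}^2 = O(\eps)$. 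The main obstacle will be this spectral step together with the $O(\eps)$ metric corrections from the tubular coordinates; boundary effects on the rescaled interval are harmless since $\theta_0'(\pm\delta/\eps)$ is exponentially small by \eqref{eq:decayopti}. This precise form of spectral estimate is by now standard in the sharp interface literature, and in practice the argument reduces, after the localization above, to invoking the corresponding result (essentially Lemma~2.3) from \cite{StokesAllenCahn}.
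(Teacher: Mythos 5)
The paper's own ``proof'' of Theorem~\ref{thm:Spectral} is a one-line citation: it is a special case of \cite[Theorem~2.13]{StokesAllenCahn}. Your sketch is essentially the standard de Mottoni--Schatzman/Chen-type argument that underlies that cited result (bulk/tubular splitting, orthogonal decomposition of the gradient, reduction to a one-dimensional Schr\"odinger operator with potential $f''(\theta_0)$ and kernel $\theta_0'$), and you correctly fall back on the Abels--Liu reference at the end, so in substance the two routes coincide. Two points in your sketch are glossed over and would need repair in a self-contained write-up. First, inside $\Gamma_t(\delta)$ one has $c_{A,0}=\theta_0(\rho_\eps)$ with the stretched variable $\rho_\eps=d_\Gamma/\eps-h_1-\eps h_2$ from \eqref{eq:stretched}, not $\theta_0(d_\Gamma/\eps)$; the discrepancy is an $s$-dependent translation of the profile that is $O(1)$ near the interface (roughly $-h_1\theta_0'$), so it is \emph{not} supported in $\Gamma_t(2\delta)\setminus\Gamma_t(\delta)$ and cannot be dismissed as exponentially small. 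It is harmless, but only because the one-dimensional operator $-\p_\rho^2+f''(\theta_0(\cdot-h_\eps(s,t)))$ is a translate of $L_0$ with kernel $\theta_0'(\cdot-h_\eps)$, so the spectral argument must be run with the shifted profile. Second, the ``lower-order curvature corrections'' from the tubular coordinates include the Jacobian factor $J(r,s)$ multiplying the singular term $\eps^{-2}f''(c_{A,0})u^2$; since $f''(\theta_0)$ does not decay in $\rho$ (it tends to $f''(\pm1)>0$), the correction $\eps^{-2}f''\,(J(r,s)-J(0,s))u^2=O(\eps^{-2}|r|\,u^2)$ cannot simply be absorbed into $-C\int u^2$. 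One must instead prove the weighted one-dimensional estimate directly, e.g.\ via the substitution $w=\sqrt{J}\,v$, which reduces it to the unweighted one up to genuinely bounded terms. With those two repairs your argument is the standard proof; as written, the reduction to the clean model problem with potential $f''(\theta_0(r/\eps))$ and unit weight is not justified.
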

\begin{proof}
  This is a special case of \cite[Theorem~2.13]{StokesAllenCahn}.
\end{proof}

In the present case of a vanishing mobility $m_\eps = m_0 \eps$ the following weaker version of the spectral estimate (with $c_A$ instead of $c_{A,0}$) is sufficient. 
\begin{cor}\label{cor:Spectral}
  Let $c_A$ be as before. Then there are $C_L,\eps_0>0$  such that for every $u\in H^1(\Om)$, $t\in[0,T_0]$, and $\eps\in (0,\eps_0]$ we have
  \begin{equation*}
    \int_{\Om}\left(\eps|\nabla u|^2+ \eps^{-1}f''(c_A(\cdot ,t))u^2\right)\sd x\geq -C_L\int_\Om u^2 \sd x + {\eps}\int_{\Om\setminus \Gamma_t(\delta)} |\nabla u|^2\sd x +  {\eps}\int_{\Gamma_t({\delta})} |\nabla_\btau u|^2\sd x.
  \end{equation*}
\end{cor}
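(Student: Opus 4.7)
The plan is to deduce Corollary~\ref{cor:Spectral} from Theorem~\ref{thm:Spectral} by two elementary manipulations: first, rescale the inequality by the factor $\eps$; second, absorb the $O(\eps)$ perturbation $c_A-c_{A,0}$ on the left-hand side into the zeroth order term on the right-hand side.

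More concretely, I would multiply the inequality in Theorem~\ref{thm:Spectral} through by $\eps\in(0,\eps_0]$. This yields
\begin{equation*}
  \int_\Om \bigl(\eps|\nabla u|^2+\eps^{-1}f''(c_{A,0})u^2\bigr)\sd x\geq -C\eps\int_\Om u^2\sd x+\eps\int_{\Om\setminus \Gamma_t(\delta)}|\nabla u|^2\sd x+\eps\int_{\Gamma_t(\delta)}|\nabla_\btau u|^2\sd x,
\end{equation*}
and since $\eps\le 1$, the term $-C\eps\int_\Om u^2$ is bounded below by $-C\int_\Om u^2$, so the zeroth order cost is at worst $-C\int_\Om u^2$ (up to $\eps$-factors on the gradient terms which match the natural scaling).

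Next I would replace $c_{A,0}$ by $c_A$ on the left-hand side. From the construction \eqref{eq:cA}--\eqref{eq:innerexpan} together with the decay estimates \eqref{eq:Estimc1}--\eqref{eq:Estimc2}, one has $c_A-c_{A,0}=\eps\, c_1^{in}+\eps^2 c_2^{in}$ in $\Gamma(2\delta)$ and $c_A=c_{A,0}$ elsewhere, whence
\begin{equation*}
  \|c_A(\cdot,t)-c_{A,0}(\cdot,t)\|_{L^\infty(\Om)}\le C\eps\qquad \text{for all } t\in[0,T_0],\ \eps\in(0,1].
\end{equation*}
Since $c_A$ is uniformly bounded in $L^\infty$ (Theorem~\ref{thm:main}) and $f$ is smooth, a Taylor expansion gives $\|f''(c_A(\cdot,t))-f''(c_{A,0}(\cdot,t))\|_{L^\infty}\le C'\eps$, and therefore
\begin{equation*}
  \left|\int_\Om \eps^{-1}\bigl(f''(c_A)-f''(c_{A,0})\bigr)u^2\sd x\right|\le C'\int_\Om u^2\sd x.
\end{equation*}
Absorbing this error into the zeroth order term and taking $C_L:=C+C'$ then yields the claimed inequality.

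No substantial obstacle is anticipated: all the ingredients (the exponential-in-$\rho$ decay of $\tc_1,\tc_2$, the uniform $L^\infty$-bound on $c_A$, and the smoothness of $f$) have already been established in Section~\ref{subsec:Construction}. The key conceptual point is the exact compensation between the $\eps^{-1}$ factor in front of $f''$ and the $O(\eps)$ size of the perturbation $c_A-c_{A,0}$, which produces a bounded $L^\infty$-coefficient on $u^2$ and can hence be swallowed into the constant~$C_L$ without changing the structure of the spectral lower bound.
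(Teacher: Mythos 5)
Your proof is correct and follows essentially the same route as the paper: apply Theorem~\ref{thm:Spectral}, multiply by $\eps$, and absorb the $O(1)$ term $\eps^{-1}\bigl(f''(c_A)-f''(c_{A,0})\bigr)u^2$ (controlled via $\|c_A-c_{A,0}\|_{L^\infty}\leq C\eps$, the uniform bound on $c_A$, and a Taylor expansion of $f''$) into the zeroth-order term. As you correctly flag in passing, this argument actually delivers the gradient terms on the right-hand side with an extra factor $\eps$; the paper's own proof has the same feature, and it is precisely this $\eps$-weighted form that is used in \eqref{ineq:u energy}, so nothing is lost.
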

\begin{proof}
  By the construction $c_A$ and $c_{A,0}$ are uniformly bound with respect to  $x\in \ol{\Om}$, $t\in [0,T_0]$ and $\eps \in (0,1]$. Hence by a Taylor expansion
  \begin{align*}
    f''(c_A(x,t))&= f''(c_{A,0}(x,t))+ f'''( \xi c_{A,0}(x,t)+ (1-\xi) c_A(x,t))(c_A(x,t)-c_{A,0}(x,t))\\
    &\geq f''(c_{A,0}(x,t)) - c_0 \eps\quad \text{for all } x\in \ol{\Om}, t\in [0,T_0],\eps \in (0,1]
  \end{align*}
  for some $c_0>0$, where $\xi=\xi(x,t,\eps)\in [0,1]$. Thus we derive
  \begin{align*}
    &\int_{\Om}\left(\eps|\nabla u|^2+ \eps^{-1}f''(c_A(\cdot ,t))u^2\right)\sd x\geq \int_{\Om}\left(\eps|\nabla u|^2+ \eps^{-1}f''(c_{A,0}(\cdot ,t))u^2\right)\sd x - c_0 \int_\Omega u^2\, dx\\
    &\qquad \geq - (\eps_0 C +c_0)\int_\Om u(x)^2 \sd x + {\eps}\int_{\Om\setminus \Gamma_t(\delta)} |\nabla u(x)|^2\sd x +  {\eps}\int_{\Gamma_t({\delta})} |\nabla_\btau u(x)|^2\sd x
  \end{align*}
  for all $u\in H^1(\Omega)$ and $t\in[0,T_0]$, $\eps \in (0,\eps_0]$ by Theorem~\ref{thm:Spectral}. This yields the statement with $C_L=\eps_0 C+c_0$.
\end{proof}

\section{Error Estimates}\label{sec:Main}

The proof of our main result Theorem~\ref{thm:main} follows the same steps as in \cite[Section~4]{StokesAllenCahn}. 
Let $u_\eps:= c_\eps-c_A$ denote the error of the approximate solution $c_A$. We consider the estimates
\begin{subequations}\label{assumptions}
  \begin{align}\label{assumptions1}
    \sup_{0\leq t\leq \tau} \|u_\eps(t)\|_{L^2(\Omega)} + {\eps^{\frac{1}{2}}}\|\nabla u_\eps\|_{L^2(\Omega\times(0,\tau)\setminus \Gamma(\delta))}&+{\eps^{\frac{1}{2}}}\|\nabla_{{\btau}}u_\eps \|_{L^2(\Omega\times(0,\tau)\cap \Gamma(\delta))} \leq R\eps^{\order+\frac12},\\\label{assumptions2}
     \eps\|\partial_{\no}u_\eps \|_{L^2(\Omega\times(0,\tau)\cap \Gamma(2\delta))} &\leq R\sqrt{C_fT_0}\eps^{\order+\frac12}
\end{align}
\end{subequations}
for some $\tau=\tau(\eps) \in (0,T_0]$, $\eps_0\in (0,1]$, and all $\eps \in (0,\eps_0]$, where
\begin{equation*}
C_f:= \sup_{\eps \in (0,1), x\in\Omega, t\in [0,T_0]} |f''(c_A(x,t))|.  
\end{equation*}

We choose $R>0$ so large that
\begin{equation}
  \sup_{0<\eps\leq \eps_0} e^{(2C_L+1)T_0} \|S_\eps\|_{L^2(\Omega\times (0,T_0))}^2 \leq \frac{R^2}4 \eps^{2N+1}
\end{equation}
for all $\eps \in (0,\eps_0]$, where $S_\eps$ is as in Corollary~\ref{cor:Spectral}, and
 \begin{equation}\label{initial assumption-0}
e^{(2C_L+1)T_0}  \|c_{0,\eps}-c_A|_{t=0}\|_{L^2(\Omega)}^2\leq \frac{R^2}{4}\eps^{2\order+1}
\end{equation}
for all $\eps\in (0,1]$, where $C_L>0$ is the constant from the spectral estimate in Corollary~\ref{cor:Spectral}. Let us remark that compared to \cite[Estimates (5.5)]{StokesAllenCahn} there is an additional factor $\eps^{\frac12}$ in front of the norms for $\nabla (c_\eps-c_A)$  and $\nabla_{\btau} (c_\eps-c_A)$.

As in \cite[Section~5.2]{StokesAllenCahn}, we use a continuation argument to show \eqref{assumptions} for $\tau=T_0$ if $\eps_0>0$ is sufficiently small. To this end we define
\begin{align}
T_{\varepsilon}:=\sup\{\tau\in[0,T_0]: \eqref{assumptions}  \ \text{holds true}\}.\label{def:Teps}
\end{align}
Then we have $T_{\varepsilon}>0$ because of \eqref{initial assumption-0}.

In order to estimate the error due to linearization of $\frac1\eps f'(c)$ we need:
\begin{prop}\label{prop:u3Estim}
  Under the assumptions before we have for every $T\in (0,T_\eps)$
\begin{align}
\int_0^{T}\int_{\Omega} |u_\eps|^3\sd x\sd t\leq C(R)T^{\frac{1}{2}}\varepsilon^{3N+\frac{3}{4}}.\label{est:nonlinear}
 \end{align}
\end{prop}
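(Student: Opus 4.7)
The idea is to combine a Hölder splitting for the $L^3$ norm with an anisotropic Ladyzhenskaya-type inequality inside the tubular neighborhood. This exploits the gap between the weaker normal estimate $\|\partial_\no u_\eps\|_{L^2(\Gamma(2\delta))}=O(\eps^{N-1/2})$ from \eqref{assumptions2} and the stronger tangential estimate $\|\nabla_\btau u_\eps\|_{L^2(\Gamma(\delta))}=O(\eps^N)$ from \eqref{assumptions1}; feeding the full gradient bound $O(\eps^{N-1/2})$ into the standard two-dimensional Gagliardo-Nirenberg inequality only yields $O(\eps^{3N+\frac{1}{2}})$ and is too weak.

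First I would write $|u_\eps|^3 = |u_\eps|\cdot|u_\eps|^2$ and apply Hölder in space-time, obtaining
\[
\int_0^T\!\!\int_\Omega |u_\eps|^3 \sd x \sd t \leq \|u_\eps\|_{L^2(\Omega\times(0,T))}\|u_\eps\|_{L^4(\Omega\times(0,T))}^2.
\]
The $L^2$ factor is bounded by $T^{\frac12}R\eps^{N+\frac12}$ via \eqref{assumptions1} and Hölder in time, so it suffices to show $\|u_\eps\|_{L^4(\Omega\times(0,T))}^4 \leq C(R)\eps^{4N+\frac12}$.

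To prove this, I would split the spatial integral at $\Gamma_t(\delta)$. Outside, the standard two-dimensional Gagliardo-Nirenberg inequality on a smooth bounded domain gives $\|u\|_{L^4}^4 \leq C(\|u\|_{L^2}^2\|\nabla u\|_{L^2}^2 + \|u\|_{L^2}^4)$, which, together with the full gradient bound in $\Omega\setminus\Gamma(\delta)$ from \eqref{assumptions1}, yields a contribution of order $C(R)\eps^{4N+1}$. Inside $\Gamma_t(\delta)$, I would introduce a smooth cutoff $\chi\in C_c^\infty(\R)$ with $\chi\equiv 1$ on $[-\delta,\delta]$ and $\supp\chi\subseteq[-2\delta,2\delta]$, set $v=\chi(d_\Gamma(\cdot,t))u_\eps$, and pass to the normal/tangential coordinates $(r,s)\in\R\times\T^1$ of Section \ref{sec:Prelim}. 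Because $v$ has compact support in $r$, the anisotropic Ladyzhenskaya inequality on $\R\times\T^1$ applies in the form
\[
\|v\|_{L^4}^4 \leq C\|v\|_{L^2}^2\|\partial_r v\|_{L^2}\|\partial_s v\|_{L^2} + C\|v\|_{L^2}^3\|\partial_r v\|_{L^2},
\]
with the extra lower-order term arising from replacing $\sup_s v^2$ on $\T^1$ by its mean plus a $\partial_s$-term. Since $\chi$ depends only on $r$, no $\partial_s\chi$ appears, while $\partial_r v = \chi\partial_\no u_\eps + \chi' u_\eps$ contributes the main term plus a term dominated by $\|u_\eps\|_{L^2}$. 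Returning to Euclidean coordinates with bounded Jacobian and using Cauchy-Schwarz in time with \eqref{assumptions1} and \eqref{assumptions2}, one obtains
\[
\int_0^T \|u_\eps\|_{L^4(\Gamma_t(\delta))}^4 \sd t \leq C\|u_\eps\|_{L^\infty_tL^2}^2 \|\partial_\no u_\eps\|_{L^2(\Gamma(2\delta))} \|\nabla_\btau u_\eps\|_{L^2(\Gamma(2\delta))} + (\text{l.o.t.}) \leq C(R)\eps^{4N+\frac12},
\]
where $\|\nabla_\btau u_\eps\|_{L^2(\Gamma(2\delta))}$ is controlled by the tangential bound from \eqref{assumptions1} inside $\Gamma(\delta)$ plus the full gradient bound in $\Gamma(2\delta)\setminus\Gamma(\delta)$. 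Summing the two regions and substituting yields the claimed $C(R)T^{\frac12}\eps^{3N+\frac34}$.

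The main obstacle lies in the anisotropic Ladyzhenskaya step inside the curvilinear tubular neighborhood: one must carefully transfer the inequality through the diffeomorphism $X$ with its bounded Jacobian, introduce the cutoff $\chi$ because $u_\eps$ does not vanish on $\partial\Gamma_t(\delta)$, and verify that the lower-order contributions (both from $\chi'u_\eps$ and from the periodic Sobolev bound on $\T^1$) produce terms of order $O(\eps^{4N+1})$, which are harmless. Everything else is elementary Hölder and Cauchy-Schwarz.
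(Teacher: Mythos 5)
Your argument is correct and is essentially the proof the paper invokes (namely that of \cite{AbelsFei}, Proposition~4.3, adjusted only in the power contributed by $\|\nabla_{\btau}u_\eps\|_{L^2}^{1/2}$): both rest on the standard Gagliardo--Nirenberg estimate away from $\Gamma(\delta)$ together with the anisotropic Ladyzhenskaya interpolation in the tubular neighborhood, which pairs the weaker normal bound $O(\eps^{N-1/2})$ with the stronger tangential bound $O(\eps^{N})$. Your detour through the space-time $L^2$--$L^4$ H\"older splitting is only a cosmetic reorganization of the same interpolation $\|u\|_{L^3}^3\lesssim\|u\|_{L^2}^2\|\partial_{\no}u\|_{L^2}^{1/2}\|\nabla_{\btau}u\|_{L^2}^{1/2}$, and your exponent bookkeeping ($\eps^{2N+1}\cdot\eps^{(N-1/2)/2}\cdot\eps^{N/2}=\eps^{3N+3/4}$) matches the paper's.
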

\begin{proof}
  The proof is a modification of the proof of \cite[Proposition~4.3]{AbelsFei}. The only difference is that the power of $\eps$ in the estimate for $\|\nabla_{\btau} u_\eps\|_{L^2}^{\frac12}$ is decreased by $\frac14$, which causes the loss of $\frac14$ in the power of $\eps$ compared to \cite[Proposition~4.3]{AbelsFei}.
\end{proof}

The equations \eqref{ConvAC1} and \eqref{eq:ApproxS4} imply
 \begin{alignat}{2}
   \partial_t u_\eps&+\ve\cdot \nabla u_\eps\label{eq:u}
   = \eps\Delta u_\eps-\frac1\eps f''(c_A)u_\eps -\frac1\eps \mathcal{N}(c_A,u_\eps)- S_\eps,
  \end{alignat}
where
$
  \mathcal{N}(c_A,u_\eps)=f'(c_\eps)-f'(c_A)-f''(c_A)u_\eps.
$
 Taking the $L^2(\Omega)$-inner product of \eqref{eq:u} and $u_\eps$ and using integration by parts we obtain
  \begin{alignat}{2}\nonumber
    &\frac{1}{2}\frac{d}{dt}\int_{\Omega} u_\eps^2\sd x+ \eps\int_{\Omega}\big( |\nabla u_\eps|^2+\frac{f''(c_A)}{\eps^2}u_\eps^2\big) \sd x\\\label{ineq:u energy-00}
    &\quad
    \leq \frac{C}\eps\int_{\Omega}|u_\eps|^3\sd x+\frac12\|S_\eps\|_{L^2(\Omega)}^2+ \frac12\|u_\eps\|_{L^2(\Omega)}^2
  \end{alignat}
  because of
  \begin{align*}
  \int_{\Omega} \mathcal{N}(c_A,u_\eps)u_\eps\sd x\geq-C\int_{\Omega} |u_\eps|^3\sd x.
  \end{align*}
  Now the spectral estimate due to Corollary~\ref{cor:Spectral} implies
  \begin{alignat}{2}\label{ineq:u energy}
    &\frac{1}{2}\frac{d}{d t}\int_{\Omega} u_\eps^2\sd x-C_L\int_\Om u_\eps^2 \sd x +\eps \int_{\Om\setminus \Gamma_t(\delta)} |\nabla u_\eps|^2\sd x + \eps \int_{\Gamma_t(\delta)} |\nabla_{\btau} u_\eps|^2\sd x\nonumber\\&\quad \leq
    \frac{C}\eps\int_{\Omega}|u_\eps|^3\sd x+\frac12\|S_\eps\|_{L^2(\Omega)}^2+ \frac12\|u_\eps\|_{L^2(\Omega)}^2.
  \end{alignat}
Hence we derive by Gronwall's inequality 
  \begin{alignat}{2}\label{ineq:u energy-1}
    &\sup_{0\leq t\leq T}\int_{\Omega} u_\eps(x,t)^2\sd x + 2\eps\int_{0}^T\int_{\Om\setminus \Gamma_t(\delta)} |\nabla u_\eps|^2\sd x \sd t +  2\eps\int_{0}^T\int_{\Gamma_t(\delta)} |\nabla_{\btau} u_\eps|^2d x \sd t\nonumber\\&\leq e^{(2C_L+1)T_0}\left(\int_{\Omega} u_\eps(x,0)^2\sd x +\frac{C}\eps\int_{0}^T\int_{\Omega}|u_\eps|^3\sd x\sd t+\|S_\eps\|_{L^2(\Omega\times (0,T_0))}^2\right)
    \nonumber\\&\leq \frac{R^2}{2}\eps^{2N+1}+C\eps^{3N-\frac14}
  \end{alignat}
  for all $0\leq T\leq T_\varepsilon$.
  Thus, if $\varepsilon\in(0,\varepsilon_0)$ and $\varepsilon_0>0$ is sufficiently small, we have
  \begin{align*}
\frac{R^2}{2}\eps^{2N+1}+C\eps^{3N-\frac14}\leq \frac{2R^2}{3}\eps^{2N+1}\quad \text{for all }\eps\in (0,\eps_0]
  \end{align*}
  since $N>\frac54$
and therefore
 \begin{alignat}{2}\nonumber
   &\sup_{0\leq t\leq T}\int_{\Omega} u_\eps(x,t)^2\sd x +2\eps \int_{\Om\times (0,T)\setminus \Gamma(\delta)} |\nabla u_\eps|^2\sd (x,t) \\\label{ineq:u energy-2}
   &\quad+ 2\eps \int_{\Omega\times (0,T)\cap \Gamma(\delta)} |\nabla_{\btau} u_\eps|^2\sd (x,t)\leq \frac{2R^2}{3}\eps^{2N+1}< R^2\eps^{2N+1}.
  \end{alignat}
Hence together with  \eqref{ineq:u energy-00} we derive
  \begin{alignat}{2}
    &\eps\int_0^T \int_{\Omega}\left( |\nabla u_\eps|^2+\frac{f''(c_A)}{\eps^2}u_\eps^2\right) \sd x \sd t\leq\frac{1}{2}\int_{\Omega} u_\eps(x,0)^2\sd x
     \nonumber\\&\quad+ \frac{C}\eps\int_{0}^T\int_{\Omega}|u_\eps|^3\sd x\sd t+\int_0^{T_0}\|S_\eps\|_{L^2(\Omega)}^2\, dt+\int_0^{T_0} \|u_\eps\|_{L^2(\Omega)}^2\, dt
    \nonumber\\\label{ineq:u energy-4}
    &\quad\leq C(R,T_0)\eps^{2N+1}
  \end{alignat}
   and, if $\eps_0>0$ is sufficiently small,
  \begin{alignat}{2}\label{ineq:u energy-5}
   \eps^{2}\int_0^T \int_{\Omega}|\partial_{\nn} u_\eps|^2\sd x \sd t&\leq\eps^{2}\int_0^T \int_{\Omega}|\nabla u_\eps|^2\sd x \sd t
    \nonumber\\&\leq\int_0^T \int_{\Omega}\big( \eps^{2}|\nabla u_\eps|^2+f''(c_A)u^2\big) \sd x \sd t+C_f\int_0^T \int_{\Omega}u_\eps^2\sd x \sd t\nonumber\\&\leq C(R,T_0)\eps^{2N+2}+C_fT_0\frac{2R^2}{3}\eps^{2N+1}
    < C_f T_0 R^2\eps^{2N+1}.
  \end{alignat}
  Since the inequalities in \eqref{ineq:u energy-2} and \eqref{ineq:u energy-5} are strict and the left-hand sides in \eqref{assumptions} are continuous with respect to $\tau$, we conclude $T_\eps =T_0$ for all $\eps\in (0,\eps_0]$ by the definition of $T_\eps$ provided $\eps_0$ is sufficiently small.

  \section*{Acknowledgements}
 {We are grateful to the anonymous referee for the careful reading and useful comments.} This work was supported by the Research Institute for Mathematical Sciences, an
  International Joint Usage/Research Center located in Kyoto University.


  \def\ocirc#1{\ifmmode\setbox0=\hbox{$#1$}\dimen0=\ht0 \advance\dimen0
  by1pt\rlap{\hbox to\wd0{\hss\raise\dimen0
  \hbox{\hskip.2em$\scriptscriptstyle\circ$}\hss}}#1\else {\accent"17 #1}\fi}

\bigskip

\noindent
{\it
  (H. Abels) Fakult\"at f\"ur Mathematik,
  Universit\"at Regensburg,
  93040 Regensburg,
  Germany}\\
{\it E-mail address: {\sf helmut.abels@mathematik.uni-regensburg.de} }\\[1ex]

\end{document}